\DeclareMathOperator{\bs}{\setminus}
\DeclareMathOperator{\la}{\langle}
\DeclareMathOperator{\ra}{\rangle}
\DeclareMathOperator{\Diam}{Diam}
\DeclareMathOperator{\Sub}{Sub}
\DeclareMathOperator{\Stab}{Stab}
\DeclareMathOperator{\h}{\mathfrak{h}}
\DeclareMathOperator{\kk}{\mathfrak{k}}
\DeclareMathOperator{\id}{id}
\DeclareMathOperator{\Comm}{Comm}
\DeclareMathOperator{\Cay}{Cay}
\DeclareMathOperator{\rk}{rk}
\newtheorem{theorem}{Theorem}[section]
\newtheorem{lemma}[theorem]{Lemma}
\newtheorem{proposition}[theorem]{Proposition}
\newtheorem{claim}{Claim}
\newtheorem{corollary}[theorem]{Corollary}
\theoremstyle{definition}
\newtheorem{definition}[theorem]{Definition}
\newtheorem{example}[theorem]{Example}
\theoremstyle{remark}
\newtheorem{remark}[theorem]{Remark}
\newtheorem{question}[theorem]{Question}
\let\OLDthebibliography\thebibliography
\renewcommand\thebibliography[1]{
  \OLDthebibliography{#1}
  \setlength{\parskip}{0pt}
  \setlength{\itemsep}{2pt plus 0.3ex}
}
\newcommand\blfootnote[1]{
  \begingroup
  \renewcommand\thefootnote{}\footnote{#1}
  \addtocounter{footnote}{-1}
  \endgroup
}
\title{On subgroups with narrow Schreier graphs}
\author{Pénélope Azuelos}
\date{\vspace{-5ex}}
\begin{document}

\maketitle

\begin{abstract}
    We study finitely generated pairs of groups $H \leq G$ such that the Schreier graph of $H$ has at least two ends and is \emph{narrow}. Examples of narrow Schreier graphs include those that are quasi-isometric to finitely ended trees or have linear growth. Under this hypothesis, we show that $H$ is a virtual fiber subgroup if and only if $G$ contains infinitely many double cosets of $H$. Along the way, we prove that if a group acts essentially on a finite dimensional CAT(0) cube complex with no facing triples then it virtually surjects onto the integers with kernel commensurable to a hyperplane stabiliser.
\end{abstract}

\section{Introduction}
\blfootnote{\\{\small \textbf{{Keywords:}} Schreier graph, quasi-line, virtual fiber subgroup, separable subgroup, bounded packing, Houghton's groups}

\smallskip
\noindent{\small \textbf{{2020 Mathematics Subject Classification:}} 20F65}}
A well-known result of Hopf \cite{Hopf} states that any finitely generated two-ended group is virtually cyclic. Analogous results exist for pairs of finitely generated groups $H \leq G$ which are two-ended in appropriate ways, subject to $H$ being ``sufficiently normal". The first such result is a theorem of Houghton stating that, if $H$ has two relative ends and infinite index in its normaliser, then there exists  a finite index subgroup $G' \leq G$ such that $H$ is a normal subgroup of $G'$ and $G'/H$ is infinite cyclic \cite[Theorem~3.7]{Houghton}. 

The number of relative ends of a subgroup $H \leq G$ was introduced by Houghton in the same paper. It can be defined as follows. Fix a finite generating set $S$ for $G$ and let $\Cay(G,S)$ be the Cayley graph of $G$ with respect to $S$. Given a subspace $K \subseteq \Cay(G,S)$, we say that a connected component of $\Cay(G,S) - K$ is \textit{deep} if it contains points arbitrarily far from $K$. Given $R \geq 0$, let $\mathcal{N}_R(H)$ denote the $R$-neighbourhood of $H$. The number of \textit{relative ends} of $H$ is the supremum over $R \geq 0$ of the number of $H$-orbits of deep components of $\Cay(G,S) - \mathcal{N}_R(H)$. In contrast, the number of \textit{filtered ends} of $H$ is the supremum over $R \geq 0$ of the number of deep components of $\Cay(G,S) - \mathcal{N}_R(H)$. This invariant was introduced by Kropholler and Roller in \cite{Kropholler-Roller}; a more geometric perspective can be found in \cite[Chapter~14]{Geoghegan}. 

If $H$ has two filtered ends it might not have two relative ends but there is a subgroup of $H$ of index at most 2 which does. This stronger assumption allows for the condition that $H$ has infinite index in its normaliser to be weakened as follows. Two subgroups $H,K \leq G$ are \textit{commensurable} if $H \cap K$ has finite index in both $H$ and $K$. The \textit{commensurator} of $H$ is the subgroup 
\[\Comm_G(H) \coloneqq \{g \in G: H \text{ and } gHg^{-1} \text{ are commensurable}\} \leq G.\]
Clearly $N(H) \leq \Comm_G(H)$, where $N(H)$ denotes the normaliser of $H$. Kropholler and Roller showed that, if $H$ has two filtered ends and infinite index in its commensurator, then there are finite index subgroups $G' \leq G$ and $H' \leq H$ such that $H'$ is a normal subgroup of $G'$ and $G'/H'$ is infinite cyclic \cite[Theorem~1.3]{Kropholler-Roller}. 

We alter the condition that $H$ has two relative ends in a different way. We say that an unbounded connected graph $Y$ is \textit{narrow} if for every $\mu \geq 1$ there exists an upper bound on the number of pairwise disjoint, unbounded, $\mu$-coarsely connected subspaces one can find in $Y$ (see Definition~\ref{narrow defn}). Examples of narrow graphs include graphs with linear growth and infinite uniformly locally finite quasi-trees with finitely many ends (see Proposition~\ref{linear growth implies narrow}). Under the assumption that the quotient $H \bs \Cay(G,S)$ has at least two ends and is narrow for some (equivalently any) finite $G$-generating set $S$, we can weaken the condition on the normaliser to the assumption that there are infinitely many distinct double cosets of $H$ (see Theorem~\ref{main} below).

We say that $H \leq G$: \\
    -- is a \textit{virtual fiber subgroup} if it is finitely generated and there are finite index subgroups $G' \leq G$ and $H' \leq H$ such that $H' \unlhd G'$ and $G'/H' = \mathbb{Z}$; \\
    -- is \textit{separable} if for every $g \in G - H$ there is a finite index subgroup $K \leq G$ such that $H \leq K$ and $g \notin K$, or equivalently if $H$ is closed with respect to the profinite topology on $G$; \\
    -- has \textit{bounded packing} if, having fixed a proper word metric $d_S$ on $G$, for every $D \geq 0$ there is a constant $R_D \geq 0$ such that in every set $\mathcal{A}$ of $R_D + 1$ left cosets of $H$ there are $gH, g'H \in \mathcal{A}$ such that $d_S(gH, g'H) > D$.

\begin{theorem} \label{main}
    Let $G$ be a finitely generated group and suppose that $H \leq G$ is a finitely generated subgroup such that $H \bs G$ is narrow and has at least two ends. Then the following statements are equivalent:
    \begin{enumerate}
        \item $H$ is a virtual fiber subgroup.
        \item $H$ is separable.
        \item $H$ has bounded packing.
        \item There are infinitely many distinct double cosets of $H$.
    \end{enumerate}
\end{theorem}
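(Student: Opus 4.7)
The plan is to prove the cycle of implications $(1) \Rightarrow (2) \Rightarrow (3) \Rightarrow (4) \Rightarrow (1)$, of which $(4) \Rightarrow (1)$ is the substantive direction.

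For $(1) \Rightarrow (2)$ I would use the profinite topology: a finite-index normal subgroup $H' \trianglelefteq G'$ with $G'/H' \cong \mathbb{Z}$ is the kernel of a homomorphism to a residually finite group, hence closed in the profinite topology of $G$; since $H$ is a finite union of cosets of $H'$, it too is closed, hence separable. For $(2) \Rightarrow (3)$ I would appeal to the theorem of Hruska and Wise that separable subgroups of finitely generated groups have bounded packing. For $(3) \Rightarrow (4)$ I would argue contrapositively: if $G = Hg_1 H \sqcup \cdots \sqcup Hg_n H$, then the left cosets of $H$ split into finitely many $H$-orbits under left multiplication, and since $[G:H] = \infty$ at least one such orbit is infinite; using the narrow two-ended geometry of $Y$ to understand how such a finite orbit structure sits inside $G$, one finds arbitrarily many cosets in the infinite orbit concentrating in a bounded region, contradicting bounded packing.

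The main implication $(4) \Rightarrow (1)$ uses the CAT(0) cube complex machinery promised in the abstract. The first step is to construct a $G$-invariant wall system on $G$. Since $Y$ has at least two ends and is narrow, removing a finite region decomposes $Y$ into finitely many deep components; pulling back a bipartition of these components under the covering map $\Cay(G,S) \to Y$ yields an $H$-invariant wall $W_0 \subseteq G$. The $G$-translates of $W_0$ form a $G$-invariant wall system, and the infinitely many double cosets ensure that infinitely many of these translates are pairwise inequivalent. Applying Sageev's construction yields a $G$-action on a CAT(0) cube complex $X$: narrowness of $Y$ bounds the number of pairwise transverse walls (hence the dimension of $X$), the axial arrangement of walls dictated by the ends of $Y$ precludes facing triples, and the wealth of distinct walls makes the action essential. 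The proposition advertised in the abstract then yields a finite-index subgroup $G' \leq G$ with a surjection $G' \to \mathbb{Z}$ whose kernel is commensurable to a hyperplane stabiliser; by construction this stabiliser is commensurable to $H$, exhibiting $H$ as a virtual fiber subgroup.

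The main obstacle will be the $G$-equivariant construction of the walls and the verification of the no-facing-triples condition. The wall $W_0$ must be chosen so that its $G$-translates interact consistently with the coarse structure of $Y$; in particular, the choice of half-space orientation must be canonical, so that translates of $W_0$ along a given double coset direction line up rather than proliferate. Narrowness is expected to play a dual role: it bounds the transverse complexity of the wall system, and, together with the hypothesis on the ends, rules out three walls in a facing configuration.
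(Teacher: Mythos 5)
Your outline of the easy implications and of the overall strategy for the hard one (Sageev's construction followed by the no-facing-triples proposition, i.e.\ Proposition~\ref{no facing triples to action on Z}) matches the paper in spirit, but you prove $(4)\Rightarrow(1)$ directly, whereas the paper factors it as $(4)\Rightarrow(3)\Rightarrow(1)$ (Propositions~\ref{4to3} and~\ref{3to1}), and this factoring is not cosmetic: it is exactly where the missing step in your argument lives. (Two minor points: $(2)\Rightarrow(3)$ is due to Yang, not Hruska--Wise; and $(3)\Rightarrow(4)$ needs no geometry of $Y$ at all, since every coset in a fixed double coset $HgH$ is at distance $d_S(H,gH)$ from $H$, so finitely many double cosets force infinitely many cosets into a bounded neighbourhood of $H$.)

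The genuine gap is the claim that ``narrowness of $Y$ bounds the number of pairwise transverse walls (hence the dimension of $X$)''. Narrowness bounds the number of pairwise \emph{disjoint unbounded} coarsely connected subsets of $H\bs G$, whereas pairwise crossing walls correspond to cosets $gH$ that are pairwise uniformly close, and their images in $H\bs G$ need be neither disjoint nor unbounded: for $g\in\Comm_G(H)$ the image $\rho_H(gH)$ is bounded, and narrowness says nothing about bounded subsets. The known criterion for finite dimensionality of the Sageev complex is bounded packing, i.e.\ statement (3), which you cannot assume while proving $(4)\Rightarrow(1)$. The paper closes this gap in Proposition~\ref{4to3}: since $H$ is finitely generated, each $\rho_H(gH)$ is $\mu$-coarsely connected and distinct double cosets have disjoint images, so narrowness forces all but finitely many double cosets to have bounded image, i.e.\ to lie in $\Comm_G(H)$; hence $[\Comm_G(H):H]=\infty$, hence $[G:\Comm_G(H)]<\infty$ by Proposition~\ref{index restrictions}, and commensurated subgroups have bounded packing. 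Relatedly, your verification that there are no facing triples is only an assertion; the paper's Claim~\ref{no facing triples} is a genuine argument that again uses narrowness -- applied to the coarsely connected sets $\rho_H(k\la\gamma\ra)$ for a skewering element $\gamma$, which yields a uniform bound $M$ on $d(\h,k\ell)$ over all $k$ with $k\h\cap\h=\emptyset$ -- and not merely the hypothesis on the ends of $Y$.
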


\begin{remark}
    The descending implications of the theorem all hold if we only assume that $G$ is finitely generated and $[G:H] = \infty$. The implication (2) $\Rightarrow$ (3) is due to Yang \cite{Yang} and the other two are straight-forward. To deduce (1) $\Rightarrow$ (2), suppose that $H$ is a virtual fiber subgroup and let $H' \leq H$ and $G'\leq G$ be finite index subgroups such that there is a surjective homomorphism $\pi: G' \rightarrow \mathbb{Z}$ with kernel $H'$. Then $H' = \cap_{n \in \mathbb{N}} \pi^{-1}(n\mathbb{Z})$ and each $\pi^{-1}(n\mathbb{Z})$ has finite index in $G$. Therefore $H'$ is closed with respect to the profinite topology on $G$ and, since $H$ is a finite union of cosets of $H'$, so is $H$. To see that (3) $\Rightarrow$ (4), note that $d_S(H, hgH) = d_S(H,gH)$ for all $g \in G$ and $h \in H$. Thus if $|H \bs G / H| < \infty$ then $d_S(gH, g'H) = d_S(H, g^{-1} g' H)$ is uniformly bounded and $H$ cannot have bounded packing unless $[G:H] < \infty$.
    
    In contrast, none of the ascending implications hold without some extra condition on $H$ or on $G$ (see Examples~\ref{2not1}, \ref{3not2}, \ref{4not3} below). Moreover, none of the properties (1)-(4) follow directly from the fact that $H \bs G$ is narrow with at least two ends, or even from the stronger assumption that $H\bs G$ is quasi-isometric to a line. This is illustrated in Section~\ref{examples} using a class of groups introduced by Houghton \cite{Houghton78}. Finally, the necessity of the condition that $H$ is finitely generated is illustrated by Example~\ref{Grigorchuk} below.
\end{remark}

    \begin{example} \label{2not1}
        If $G$ is a hyperbolic surface group and $H \leq G$ is cyclic then $H$ is separable by \cite{Scott78} but $H$ is not a virtual fiber subgroup. Note that $H$ also has two relative ends and two filtered ends.  
    \end{example}
    \begin{example} \label{3not2}
    Burger and Mozes constructed in \cite{Burger-Mozes} a family of finitely presented simple groups which split as an amalgamated free product $F \ast_E F$, where $F$ is a free group of finite rank and $E$ is a finite index subgroup of both factors. Let $G = F \ast_E F$ be such a group. Then, since $G$ acts on a locally finite tree with $E$ an edge stabiliser, \cite[Theorem~3.2]{Hruska-Wise} implies that $E$ has bounded packing in $G$. However, the fact that $G$ is simple prevents it from having any proper finite index subgroups, so $E$ cannot be separable.
    \end{example}
    \begin{example} \label{4not3}
    Let $G,H$ be finitely generated groups such that $H \leq G$ and $H$ does not have bounded packing in $G$. Then $H \times \{0\}$ does not have bounded packing in $G \times \mathbb{Z}$ but $\{H \times \{n\} : n \in \mathbb{Z}\}$ are pairwise distinct double cosets of $H$. 
    \end{example}
    \begin{example} \label{Grigorchuk}
    Let $G$ be the Grigorchuk group -- viewed as a group of automorphisms of the 2-regular rooted tree $T$ -- and recall that $G$ is a finitely generated torsion group, so in particular cannot contain any virtual fiber subgroups (see e.g. \cite{Grigorchuk2005}). Given a ray $\xi$ based at the root of $T$, let $G_\xi$ denote the stabiliser of $\xi$. The stabiliser of each vertex of $\xi$ has finite index in $G$ so $G_\xi$ is an intersection of finite index subgroups and is therefore separable. For many choices of $\xi$, the Schreier graph of $G_\xi$ is quasi-isometric to a line (see e.g. \cite[page~201]{Grigorchuk2005}) so in particular it is narrow and two-ended yet $G_\xi$ does not satisfy the conclusion of Theorem~\ref{main}.
    \end{example}

There are many examples of group pairs that satisfy the conditions of Theorem~\ref{main}. For instance if $H \unlhd G$ is a finitely generated pair such that $G / H = \mathbb{Z}$ then $G/H$ is a quasi-line, so it is narrow by Proposition~\ref{linear growth implies narrow} and has two ends. More generally, if $H$ is a virtual fiber subgroup of a finitely generated group $G$ then $H \bs G$ is quasi-isometric to either $\mathbb{Z}$ or $\mathbb{N}$ (see Corollary~\ref{virtual fibers are narrow}) so Theorem~\ref{main} is close to optimal. This raises the following natural question. 

\smallskip

\begin{question}
    Does there exist a finitely generated pair of groups $H \leq G$ such that $H \bs G$ is narrow and one-ended (e.g. a quasi-ray) and $|H \bs G / H| = \infty$ but $H$ is not a virtual fiber subgroup?
\end{question}

\smallskip

The following corollary follows from Theorem~\ref{main} and Proposition~\ref{linear growth implies narrow}.

\begin{corollary}
    Let $H\leq G$ be a finitely generated pair of groups such that $H \bs G$ has at least two ends and either $H \bs G$ has linear growth or $H \bs G$ is quasi-isometric to a finitely ended tree. Then $H$ is a virtual fiber subgroup if and only if $|H \bs G / H| = \infty$.
\end{corollary}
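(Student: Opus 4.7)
The plan is to deduce this corollary directly from Theorem~\ref{main} by verifying the narrowness hypothesis in each of the two stated cases, at which point the equivalence $(1) \Leftrightarrow (4)$ in Theorem~\ref{main} gives exactly the desired conclusion.

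To verify narrowness, I would first note that since $G$ is finitely generated, the Schreier graph $H\bs G$ with respect to any finite generating set of $G$ is uniformly locally finite, with vertex degrees bounded by twice the size of the generating set. In the case where $H \bs G$ has linear growth, Proposition~\ref{linear growth implies narrow} applies directly. In the case where $H \bs G$ is quasi-isometric to a finitely ended tree, $H\bs G$ is by definition a quasi-tree; and since the number of ends is a quasi-isometry invariant of connected, uniformly locally finite graphs, $H \bs G$ has finitely many ends. Combined with uniform local finiteness, Proposition~\ref{linear growth implies narrow} again gives narrowness.

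Having established that $H \bs G$ is narrow in both cases, and since it has at least two ends by hypothesis, Theorem~\ref{main} yields the equivalence of $H$ being a virtual fiber subgroup with $|H \bs G / H| = \infty$, which is the corollary. No substantial obstacle is anticipated: both cases are subsumed by a single invocation of Proposition~\ref{linear growth implies narrow}, and the only small point to check is that the proposition as stated really does cover uniformly locally finite quasi-trees with finitely many ends as a separate clause, which the introduction confirms.
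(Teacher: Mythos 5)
Your proof is correct and follows exactly the route the paper intends: the paper states that this corollary "follows from Theorem~\ref{main} and Proposition~\ref{linear growth implies narrow}", and your verification that the Schreier graph is uniformly locally finite (so that clause (b) of that proposition applies in the quasi-tree case) is precisely the small implicit check needed. Nothing further is required.
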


The following is a consequence of Theorem~\ref{main} and Corollary~\ref{virtual fibers are narrow}.

\begin{corollary} \label{narrow with too many ends}
    Let $H \leq G$ be a finitely generated pair of groups and suppose that $H \bs G$ has $3 \leq e <\infty$ ends and is narrow. Then $H$ has finitely many double cosets in $G$.
\end{corollary}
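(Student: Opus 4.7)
The plan is to argue by contradiction, directly chaining together the two results mentioned just before the corollary. Suppose that $H \bs G$ has exactly $e$ ends with $3 \leq e < \infty$ and is narrow, but that $|H \bs G / H| = \infty$. Since $e \geq 3 \geq 2$, the hypothesis of Theorem~\ref{main} is satisfied: $H \bs G$ is narrow and has at least two ends. By the equivalence (4)~$\Rightarrow$~(1) of that theorem, the subgroup $H$ must then be a virtual fiber subgroup of $G$.

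Next I would invoke Corollary~\ref{virtual fibers are narrow}, which states that whenever $H$ is a virtual fiber subgroup of a finitely generated group $G$, the coset space $H \bs G$ is quasi-isometric to either $\mathbb{Z}$ or $\mathbb{N}$. In particular, $H \bs G$ has one or two ends, contradicting our standing assumption that it has $e \geq 3$ ends. Hence $|H \bs G / H| < \infty$, as required.

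There is essentially no obstacle here beyond invoking the cited results in the correct order; the only thing to check is that the hypotheses of Theorem~\ref{main} (finite generation of both $G$ and $H$, narrowness of $H \bs G$, and at least two ends) are all inherited from the hypotheses of the corollary, which they plainly are.
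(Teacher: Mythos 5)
Your argument is correct and is exactly the route the paper intends: it states that the corollary "is a consequence of Theorem~\ref{main} and Corollary~\ref{virtual fibers are narrow}," and your contradiction via (4)~$\Rightarrow$~(1) followed by the quasi-isometry to $\mathbb{Z}$ or $\mathbb{N}$ (hence at most two ends) is precisely that deduction.
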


Several classes of groups $G$ have been shown to have the property that if $H \leq G$ is a subgroup such that $|H \bs G / H| < \infty$ then $|G/H| < \infty$. These include finitely generated nilpotent-by-polycyclic groups \cite[Proposition~3.20]{Cornulier2006} and branch groups \cite[Theorem~A]{Francoeur}. We therefore have the following corollary of Theorem~\ref{main}.

\begin{corollary}
    Let $G$ be a finitely generated group and suppose that $G$ is either nilpotent-by-polycyclic or a branch group. If $H \leq G$ is finitely generated and $H \bs G$ is narrow with at least two ends then $H$ is a virtual fiber subgroup.
\end{corollary}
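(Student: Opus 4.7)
The plan is to reduce directly to Theorem~\ref{main} via the two results cited just before the corollary. All the hypotheses of Theorem~\ref{main} are already built into the statement (the pair $H \leq G$ is finitely generated and $H \bs G$ is narrow with at least two ends), so it suffices to verify condition (4), namely that there are infinitely many double cosets of $H$.

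First I would observe that having at least two ends forces $H \bs G$ to be unbounded, and hence $[G:H] = \infty$. Then the contrapositive of \cite[Proposition~3.20]{Cornulier2006} (in the nilpotent-by-polycyclic case) or \cite[Theorem~A]{Francoeur} (in the branch group case) immediately yields $|H \bs G / H| = \infty$, since those results say that finitely many double cosets forces finite index. Applying Theorem~\ref{main}, the implication (4) $\Rightarrow$ (1) then gives that $H$ is a virtual fiber subgroup.

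There is essentially no obstacle; the corollary is a packaging of Theorem~\ref{main} together with two black-box results, and the only thing to check is the elementary fact that two ends implies infinite index. The only minor care needed is making sure the external hypotheses match: the branch group theorem of Francoeur and the nilpotent-by-polycyclic result of Cornulier apply to arbitrary subgroups of finite index in $\Comm_G(H)$/double coset count, and our $H$ is finitely generated, so no additional regularity of $H$ is required beyond what is stated.
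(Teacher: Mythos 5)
Your proposal is correct and matches the paper's (implicit) argument exactly: the paper derives this corollary from Theorem~\ref{main} precisely by noting that the cited results of Cornulier and Francoeur force $|H \bs G / H| = \infty$ whenever $[G:H] = \infty$, which holds here since $H \bs G$ has at least two ends. Nothing further is needed.
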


Note that nilpotent-by-polycyclic groups also have the property that every finitely generated subgroup has bounded packing \cite[Theorem~3.7]{Sardar} but they are not in general subgroup separable.

\paragraph{Consequences for the space of subgroups.}
Let $\Sub(G)$ denote the set of all subgroups of $G$ equipped with the topology induced from the Cantor set $\{0,1\}^G$ (viewed as the power set of $G$). In previous work with Damien Gaboriau, we showed that if $G$ is finitely generated and $H \leq G$ is such that $H \bs G$ is quasi-isometric to a line then there are countably many intermediate subgroups $H \leq K \leq G$ \cite[Proposition~3.15]{AG23}. In the case where $H$ is finitely generated this implies that $H$ is not in the perfect kernel of $\Sub(G)$. More precisely, we showed that such a subgroup $H$ vanishes by the third Cantor-Bendixson derivative of $\Sub(G)$ \cite[Theorem~3.10]{AG23} (i.e. $\rk_{CB}^\times(G;H) \leq 3$ in the language of that paper). Combining this with the results of this paper we obtain the following.

\begin{corollary}
    Let $G$ be finitely generated and $H \leq G$ be a subgroup such that $H \bs G$ is narrow. Then there are countably many intermediate subgroups $H \leq K \leq G$. If in addition $H$ is finitely generated and $H \bs G$ has at least two ends then  $\rk_{CB}^\times(G;H) \leq 3$.
\end{corollary}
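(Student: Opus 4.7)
I plan to combine Theorem~\ref{main}, Corollary~\ref{virtual fibers are narrow}, and the earlier results \cite[Proposition~3.15 and Theorem~3.10]{AG23} on the case when $H\bs G$ is quasi-isometric to a line. In both assertions the case $[G:H]<\infty$ is immediate (finitely many intermediate subgroups, and $H$ isolated in $\Sub(G)$), so I assume that $H\bs G$ is unbounded. Narrowness then forces the end-set of $H\bs G$ to be finite, since distinct ends give rise to pairwise disjoint unbounded connected subspaces of $H\bs G$, which cannot be arbitrarily numerous.

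For the first assertion I would associate to each intermediate subgroup $H\leq K\leq G$ the partition of the end-set of $H\bs G$ induced by the quotient map $H\bs G \to K\bs G$, where two ends of $H\bs G$ are declared equivalent when they become identified, or bounded, in $K\bs G$. There are only finitely many such partitions, so it suffices to bound, for each fixed partition, the number of intermediate subgroups realising it. When the partition collapses all ends, $K$ has finite index in $G$ and there are only countably many such $K$ since $G$ is finitely generated. Otherwise, $K\bs G$ retains a quasi-line- or quasi-ray-like structure along each surviving end, and I would extend the quasi-line argument of \cite[Proposition~3.15]{AG23} to the narrow setting in order to show that only countably many $K$ realise a given non-trivial pattern. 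Example~\ref{Grigorchuk} warns that this reduction must proceed through the coarse geometry of $H\bs G$ alone and cannot invoke Theorem~\ref{main}, since $H$ is not assumed finitely generated in this part of the statement.

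For the second assertion, $H$ is finitely generated and $H\bs G$ has at least two ends, so Theorem~\ref{main} provides the dichotomy: either $|H\bs G/H|<\infty$ or $H$ is a virtual fiber subgroup. In the virtual fiber case, Corollary~\ref{virtual fibers are narrow} combined with the at-least-two-ends hypothesis rules out the quasi-ray alternative and forces $H\bs G$ to be quasi-isometric to a line, so \cite[Theorem~3.10]{AG23} applies verbatim and yields $\rk_{CB}^\times(G;H)\leq 3$. The remaining case $|H\bs G/H|<\infty$ (which by Corollary~\ref{narrow with too many ends} automatically holds whenever $H\bs G$ has three or more ends) requires a separate short argument that directly bounds the Cantor--Bendixson rank at $H$ in $\Sub(G)$, exploiting that with only finitely many double cosets the intermediate subgroups $H\leq K\leq G$ sit in a rigid combinatorial family around $H$. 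The main technical hurdle will be the extension of the quasi-line argument of \cite[Proposition~3.15]{AG23} to the narrow setting in the first assertion, where one must track the countability of intermediate subgroups without the benefit of Theorem~\ref{main}.
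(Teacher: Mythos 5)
There is a genuine gap in your treatment of the first assertion, and it stems from overlooking a tool the paper has already built. The "extension of the quasi-line argument of \cite[Proposition~3.15]{AG23} to the narrow setting" that you defer as the main technical hurdle is precisely Proposition~\ref{index restrictions} of this paper: for any tower $H \leq K \leq G$ with $H \bs G$ narrow, either $[G:K] < \infty$ or $[K:H] < \infty$ (proved by lifting a geodesic ray of $K \bs G$ to pairwise disjoint connected unbounded lifts in $H \bs G$, which narrowness bounds in number). Once you have this dichotomy, countability is immediate: a finitely generated group has finitely many subgroups of each finite index, and there are at most countably many overgroups $K \geq H$ with $[K:H] = k$ since each is generated by $H$ together with finitely many elements of the countable group $G$. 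Your proposed machinery of end-partitions induced by $H \bs G \to K \bs G$ is not needed, and as written it does not close the argument: you never show that only countably many $K$ realise a given non-trivial partition, and it is not clear how you would do so without, in effect, reproving the index dichotomy. Note also that "the partition collapses all ends" does not by itself give $[G:K] < \infty$; a one-ended unbounded $K \bs G$ is not ruled out by your collapsing condition as stated.

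Your second assertion is essentially the paper's argument and is fine: Theorem~\ref{main} gives the dichotomy, Corollary~\ref{virtual fibers are narrow} together with the two-ends hypothesis forces $H \bs G$ to be a quasi-line in the virtual fiber case, and \cite[Theorem~3.10]{AG23} applies. For the case $|H \bs G / H| < \infty$ you should make the "separate short argument" explicit rather than gesturing at it: since $H$ is finitely generated, any subgroup containing a finite generating set of $H$ contains $H$, and since every intermediate subgroup is a union of the finitely many double cosets of $H$ there are only finitely many overgroups of $H$; excluding the others by finitely many further conditions shows $H$ is isolated in $\Sub(G)$, whence $\rk_{CB}^\times(G;H) = 1$.
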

\begin{proof}
    Let $H \leq K \leq G$ be an intermediate subgroup. By Proposition~\ref{index restrictions} either $[G:K]<\infty$ or $[K:H]<\infty$. For each $k \in \mathbb{N}$, there are finitely many subgroups of $G$ of index $k$ and there are at most countably many subgroups of $G$ containing $H$ with index $k$ so this implies that there are countably many such subgroups $H \leq K \leq G$. 
    Suppose that $H$ is finitely generated and that $H \bs G$ has at least two ends. Theorem~\ref{main} implies that either $H$ is a virtual fiber subgroup or $|H \bs G / H| < \infty$. In the former case Corollary~\ref{virtual fibers are narrow} implies that $H \bs G$ is a quasi-line so  $\rk_{CB}^\times(G;H) \leq 3$ by \cite[Theorem~3.10]{AG23} and in the latter $H$ is isolated in $\Sub(G)$ so  $\rk_{CB}^\times(G;H) =1$.
\end{proof}

\paragraph{Bounded packing.} The bounded packing property was introduced by Hruska and Wise in \cite{Hruska-Wise}. Examples of subgroups which satisfy this property include, on the one hand, subgroups which are normal (this was shown by Hruska and Wise) or commensurated \cite{Connor-Mihalik-2013}, and, on the other, quasiconvex subgroups of hyperbolic groups \cite{Sageev97} (see also \cite[Theorem~2]{Chepoi-Dragan-Vaxes}) and many of their generalisations (e.g. relatively quasiconvex subgroups of suitable relatively hyperbolic groups \cite{Hruska-Wise}, strongly quasiconvex subgroups of any finitely generated group \cite{Tran}, hierarchically quasiconvex subgroups of hierarchically hyperbolic groups \cite{HHP}). This list is highly non-exhaustive.

\paragraph{CAT(0) cube complexes.}
The main content of Theorem~\ref{main} is the implication $(3) \Rightarrow (1)$ and this is proved using an action of $G$ on a finite dimensional CAT(0) cube complex $C$. This cube complex is obtained using Sageev's construction \cite{Sageev95} applied to the codimension one subgroup $H$ and the fact that it is finite dimensional follows from the fact that $H$ has bounded packing \cite{Hruska-Wise}. This will allow us to use tools developed by Caprace and Sageev in \cite{Caprace-Sageev} to show that $C$ has no facing triples of hyperplanes.
We will conclude using the following proposition, which may be of independent interest.

\begin{proposition} \label{no facing triples to action on Z}
Let $\Gamma$ be a group and suppose that $\Gamma$ acts essentially on a finite dimensional CAT(0) cube complex $Y$ which has no facing triples. Then for any hyperplane $\h$ in $Y$, there are finite index subgroups $K  \leq \Stab_\Gamma(\h)$ and $\Lambda \leq \Gamma$ such that $K \unlhd \Lambda$ and $\Lambda / K = \mathbb{Z}$.
\end{proposition}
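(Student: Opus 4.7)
The plan is to combine Caprace--Sageev's skewering machinery with the no facing triples hypothesis to build a $\mathbb{Z}$-indexed chain of hyperplanes through $\h$ on which a finite index subgroup of $\Gamma$ acts by translations.

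First, applying the Double Skewering Lemma of Caprace--Sageev to the essential action on the finite dimensional cube complex $Y$, I would produce $g \in \Gamma$ and a choice of halfspace $\h^+$ with $g\h^+ \subsetneq \h^+$. The orbit $\{g^n \h\}_{n \in \mathbb{Z}}$ then gives a chain of pairwise disjoint hyperplanes with strictly nested halfspaces, and by essentiality the nested intersections are empty while the ascending unions exhaust $Y$. Using the no facing triples hypothesis, I would enlarge this to a canonical chain $\mathcal{C}$ consisting of all hyperplanes parallel to every $g^n \h$; applying the hypothesis to any triple of pairwise parallel hyperplanes in $\mathcal{C}$ then yields a linear order on $\mathcal{C}$. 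Discreteness follows because only finitely many hyperplanes can sit between any two disjoint hyperplanes in a CAT(0) cube complex, and $\mathcal{C}$ is order isomorphic to $\mathbb{Z}$ since it contains the $g$-chain which is already unbounded at both ends.

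Next I would set $\Lambda_0 = \Stab_\Gamma(\mathcal{C})$, the setwise stabiliser of $\mathcal{C}$ in $\Gamma$, and prove $[\Gamma : \Lambda_0] < \infty$ using finite dimensionality of $Y$ to bound the number of parallelism classes of hyperplanes in the orbit $\Gamma \cdot \h$. Passing to the index at most $2$ subgroup $\Lambda \leq \Lambda_0$ of elements preserving the order on $\mathcal{C} \cong \mathbb{Z}$, the translation length on $\mathcal{C}$ defines a homomorphism $\varphi \colon \Lambda \to \mathbb{Z}$ whose image contains $\varphi(g) \neq 0$; after rescaling, $\varphi$ surjects onto $\mathbb{Z}$. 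Its kernel $K$ fixes $\mathcal{C}$ pointwise, so in particular $K \leq \Stab_\Gamma(\h)$. Conversely, any $\gamma \in \Stab_\Gamma(\h) \cap \Lambda$ fixes the element $\h$ of the ordered set $\mathcal{C}$ while preserving the order, which forces $\gamma$ to fix $\mathcal{C}$ pointwise. Thus $K$ has finite index in $\Stab_\Gamma(\h)$, as required.

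The main obstacle I anticipate is showing that the candidate set $\mathcal{C}$ is actually pairwise parallel. A priori, two hyperplanes in $\mathcal{C}$ could cross without themselves forming part of a facing triple; the argument must use the skewering $g$ to show that such a crossing pair would have to sit in the same ``slab'' between consecutive $g^n\h$, and then manufacture three pairwise parallel hyperplanes from them that violate the no facing triples hypothesis. Bounding $[\Gamma : \Lambda_0]$ will similarly require a careful structural analysis of how parallelism classes can coexist in a finite dimensional CAT(0) cube complex without facing triples.
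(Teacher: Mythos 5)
Your overall strategy --- extract a $\mathbb{Z}$-like structure from the hyperplanes ``parallel'' to the skewered chain $\{g^n\h\}$ and define the homomorphism to $\mathbb{Z}$ as a translation length --- is close in spirit to what the paper does, but the step you flag as the main obstacle is not merely unproven: it is false. The set $\mathcal{C}$ of hyperplanes disjoint from every $g^n\h$ need not be pairwise disjoint, even assuming essentiality and no facing triples. Concretely, let $Y$ be the CAT(0) square complex built from squares $\sigma_n$ ($n\in\mathbb{Z}$) by attaching, for each $n$, an edge $e_n$ from a corner of $\sigma_n$ to a corner of $\sigma_{n+1}$, chosen so that the two attaching corners of each square are diagonally opposite; let $\Gamma=\mathbb{Z}$ act by the shift. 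This action is essential, $Y$ is $2$-dimensional, and every pairwise disjoint triple of hyperplanes has a member separating the other two (the ``middle'' one in the obvious linear arrangement of the $\sigma_n$ and $e_n$), so there are no facing triples. Take $\h$ dual to $e_0$ and $g$ the shift, so $g\h^+\subsetneq\h^+$. Since each $\h_n\coloneqq g^n\h$ is dual only to the edge $e_n$, which lies in no square, $\h_n$ crosses no other hyperplane; hence the two hyperplanes of each square $\sigma_m$ both belong to $\mathcal{C}$, and they cross each other. So $\mathcal{C}$ carries no linear order, your ``manufacture a facing triple from a crossing pair in a slab'' step cannot succeed, and the translation-length homomorphism is not defined. (The proposition itself holds trivially here, with $K=\{1\}$ and $\Lambda=\Gamma$, but your construction of $\varphi$ breaks.)

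The paper avoids this exact difficulty by never requiring the transverse hyperplanes to form a chain: the homomorphism is the transfer map $tr_{M^+}(g)=|M^+-g^{-1}M^+|-|g^{-1}M^+-M^+|$ associated to the commensurated set $M^+$ of hyperplanes contained in $\h^+$, a count which is insensitive to crossings among the hyperplanes in a given slab. Your second unproved claim, $[\Gamma:\Stab_\Gamma(\mathcal{C})]<\infty$, is also problematic as stated: ``parallelism classes'' are not well defined because disjointness of hyperplanes is not transitive (the same example shows this), and $\mathcal{C}$ depends on the choice of $g$, so there is no a priori reason its setwise stabiliser is large. In the paper this finite-index step is exactly where the hardest input enters, namely Lemma~\ref{universal skewer}: one first produces a single hyperbolic element $\gamma$ skewering \emph{every} hyperplane of $Y$ (an induction on $\dim Y$ using Lemmas~\ref{Euclidian isometries} and~\ref{finite Hausdorff distance}), and only then can one bound the index of $\la\Stab_\Gamma(\h),\gamma\ra$ by counting hyperplanes that cross a fundamental segment of the combinatorial axis of $\gamma$. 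Your proposal has no analogue of this lemma, and without it I do not see how either the finite-index claim or a corrected version of the chain argument would go through.
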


The fact that such a group virtually surjects onto $\mathbb{Z}$ is not entirely new but the control we have over the kernel appears to be (see Remark~\ref{comparisons}).

\paragraph{Outline of the paper.} We recall some useful definitions and facts relating to CAT(0) cube complexes in Section~\ref{preliminaries}. In Section~\ref{narrow quotients}, we define the notion of a narrow graph and show that graphs with linear growth and graphs that are infinite, uniformly locally finite, finitely ended quasi-trees are narrow. Theorem~\ref{main} is proved in Section~\ref{proof}, modulo Proposition~\ref{no facing triples to action on Z} which is proved in Section~\ref{4}. In Section~\ref{examples}, we present examples for each $n \geq 2$ of finitely generated pairs of groups $H \leq G$ such that $H \bs G$ is an $n$-ended quasi-tree with linear growth (in particular narrow) but $H$ is not a virtual fiber subgroup.

\paragraph{Acknowledgements.}
I am very grateful to Mark Hagen and Indira Chatterji for many helpful comments and suggestions and for carefully reading several drafts of this paper. I thank Yves Cornulier and Paul-Henry Leemann for pointing out Example~\ref{Grigorchuk} and Adrien Le Boudec for asking me whether a previous version of Theorem~\ref{main} could be extended to include subgroups whose Schreier graphs have linear growth. I am grateful to the anonymous referee for a number of helpful comments and corrections. This work was funded by a University of Bristol PhD scholarship.

\section{Cube complexes and facing triples} \label{preliminaries}

We assume some familiarity with actions on CAT(0) cube complexes; introductory material can be found in \cite{Chatterji-Niblo, Hruska-Wise-2014, Nica}. Recall that in \cite{Sageev95} Sageev gave a construction which, given a pair of groups $H \leq G$ such that $H$ has at least two relative ends, produces an unbounded CAT(0) cube complex on which $G$ acts with a single orbit of hyperplanes such that $H$ has finite index in the stabiliser of some hyperplane $\mathfrak{h}$. As was pointed out by Haglund and Wise in \cite[Corollary~3.1]{Haglund-Wise}, it was observed in \cite{Sageev97} that if $H$ is finitely generated and has bounded packing then Sageev's construction yields a finite dimensional cube complex (see also \cite[Corollary~3.31]{Hruska-Wise-2014}). 

We will work with the piecewise $\ell^1$ metric on CAT(0) cube complexes (see \cite{Miesch}) which we denote by $d$. Let $g$ be a cubical automorphism of a CAT(0) cube complex $C$ and recall that $g$ is an isometry with respect to both $d$ and the CAT(0) metric (i.e. the piecewise $\ell^2$ metric). A \textit{combinatorial geodesic} in $C$ is a geodesic path $\ell: [0,L] \rightarrow C^{(1)}$ where $L \in \mathbb{N}$ and $\ell(i) \in C^{(0)}$ for each $i \in \mathbb{Z} \cap [0,L]$.
Haglund showed in \cite[Theorem~6.3]{Haglund} that $g$ either fixes a point in $C$ or admits a combinatorial geodesic axis in the \textit{cubical subdivision} $C'$ of $C$ (see \cite[Definition~2.3, Lemma~4.2]{Haglund}).
In the later case, $g$ is called \textit{hyperbolic} and admits a (not necessarily unique or combinatorial)
geodesic axis in $C$ (i.e. a bi-infinite $\ell^1$-geodesic on which $\la g \ra$ acts by translation). Such an isometry $g$ is also hyperbolic in the usual CAT(0) sense and as such also admits an $\ell^2$ geodesic axis.

Let us recall some notions and results from \cite{Caprace-Sageev}.
Let $C$ be a finite dimensional cube complex and $G$ be a group acting by cubical automorphisms on $C$. 
The action is said to be \textit{essential} if one (equivalently every) $G$-orbit contains points arbitrarily far from any halfspace of $G$. If $\h$ is a hyperplane of $C$ then we denote by $\h^+, \h^-$ the halfspaces bounded by $\h$. For any $g \in G$ we write $g \h^+$ (respectively $g\h^-$) to denote the image of $\h^+$ (respectively $\h^-$) under $g$ and $(g\h)^+$ to denote an independent choice of halfspace bounded by $g\h$. An element $g \in G$ is said to \textit{skewer} $\h$ if there is some $n \in \mathbb{N}$ such that either $g^n \h^+ \subsetneq \h^+$ or $g^n \h^- \subsetneq \h^-$. Caprace and Sageev show in \cite[Lemma~2.3]{Caprace-Sageev} that an element $g \in G$ skewers a hyperplane $\h$ if and only if $g$ is hyperbolic and one (equivalently any) axis of $g$ intersects $\h$ at a single point. This statement refers to geodesic axes of $g$ with respect to the CAT(0) metric but the same proof shows that the statement holds for combinatorial geodesic axes of $g$. The following characterisation of essentiality will be useful.

\begin{proposition}[Caprace-Sageev, {\cite[Lemma 2.4, Proposition 3.2]{Caprace-Sageev}}] \label{CS skewering}
Let $C$ be a finite dimensional CAT(0) cube complex and $G$ be a group acting on $C$ with a single orbit of hyperplanes. Then the following are equivalent.
\begin{itemize}
    \item $C$ is unbounded.
    \item A hyperplane of $C$ is skewered by some element of $G$.
    \item Every hyperplane of $C$ is skewered by some element of $G$.
    \item The action of $G$ on $C$ is essential.
\end{itemize}
\end{proposition}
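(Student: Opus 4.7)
The plan is to use essentiality to extract a skewering element, build a $\mathbb{Z}$-valued height function along the resulting chain of hyperplanes, and exploit the no-facing-triples hypothesis to force a finite-index subgroup of $\Gamma$ to act on this chain by integer translations.

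By Proposition~\ref{CS skewering} (applied to the essential subcomplex generated by the $\Gamma$-orbit of $\h$), essentiality yields an element $t \in \Gamma$ that skewers $\h$; after replacing $t$ by a power I may assume $t\h^+ \subsetneq \h^+$. Set $\h_k = t^k\h$ for $k \in \mathbb{Z}$, so that $\{\h_k\}$ is a strictly nested, pairwise disjoint chain. A preliminary observation, using only that combinatorial distance in the connected complex $Y$ is finite and counts separating hyperplanes, is that $\bigcap_{k\in\mathbb{Z}}\h_k^+ = \bigcap_{k\in\mathbb{Z}}\h_k^- = \emptyset$: a point in such an intersection would be separated from a point of the opposite halfspace by infinitely many $\h_k$'s. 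Consequently each $x \in Y \setminus \bigcup_k \h_k$ has a well-defined level $\ell(x) = \max\{k : x \in \h_k^+\} \in \mathbb{Z}$, satisfying $\ell(tx) = \ell(x) + 1$.

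The crucial use of the no-facing-triples hypothesis is to control the orbit $\Gamma \cdot \h$ relative to this chain. If $g\h$ is disjoint from the chain, applying no facing triples to $\{g\h, \h_k, \h_{k+1}\}$ forces one member to separate the other two, and combined with the vanishing of $\bigcap_k\h_k^\pm$ this places $g\h$ in a definite gap between consecutive $\h_k$'s. If instead $g\h$ crosses some $\h_k$, a similar analysis bounds the crossings and pins down $g\h$'s asymptotic position. The key consequence is that the translated chain $\{g\h_k\}$ cannot be ``transverse'' to $\{\h_k\}$ without producing a facing triple, so the setwise stabiliser $\Sigma \leq \Gamma$ of $\{\h_k\}$ has finite index in $\Gamma$.

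Let $\Lambda \leq \Sigma$ be the index-$\leq 2$ subgroup of orientation-preserving elements. For each $g \in \Lambda$ there is a unique $\phi(g) \in \mathbb{Z}$ with $g\h_k = \h_{k+\phi(g)}$ for all $k$, and this defines a surjective homomorphism $\phi: \Lambda \to \mathbb{Z}$ (surjectivity from $\phi(t)=1$). Set $K = \ker\phi$. Then $K$ consists of elements fixing every $\h_k$, so in particular $K \leq \Stab_\Gamma(\h)$; conversely any $g \in \Stab_\Gamma(\h) \cap \Lambda$ satisfies $g\h_0 = \h_0$ and hence $\phi(g)=0$, so $K = \Stab_\Gamma(\h) \cap \Lambda$. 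Since $\Lambda$ has finite index in $\Gamma$, $K$ has finite index in $\Stab_\Gamma(\h)$, and by construction $K \unlhd \Lambda$ with $\Lambda/K \cong \mathbb{Z}$. The main obstacle I anticipate is rigorously establishing that the setwise chain stabiliser $\Sigma$ has finite index in $\Gamma$: this requires extracting a facing triple from any hypothetical $g$ whose image chain $\{g\h_k\}$ is ``transverse'' to $\{\h_k\}$, using the finite dimensionality of $Y$ to bound the combinatorics of the interleaving.
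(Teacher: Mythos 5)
Your proposal does not address the statement at all: what you have written is a proof attempt for Proposition~\ref{no facing triples to action on Z} (the claim that a group acting essentially on a finite dimensional CAT(0) cube complex with no facing triples virtually surjects onto $\mathbb{Z}$ with kernel commensurable to a hyperplane stabiliser), not for Proposition~\ref{CS skewering}. The statement you were asked to prove is the four-way equivalence, for a finite dimensional CAT(0) cube complex $C$ with a single $G$-orbit of hyperplanes, of: $C$ unbounded; some hyperplane skewered; every hyperplane skewered; the action essential. Your argument never engages with any of these implications. Worse, your very first step invokes Proposition~\ref{CS skewering} itself (``essentiality yields an element $t$ that skewers $\h$''), so read as a proof of that proposition it is circular; and your hypotheses (no facing triples, an essential action of a group $\Gamma$ on $Y$) and conclusion ($K \unlhd \Lambda$ with $\Lambda/K \cong \mathbb{Z}$) are those of the other proposition.

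For the record, the paper gives no proof of Proposition~\ref{CS skewering}: it is quoted directly from Caprace--Sageev (Lemma~2.4 and Proposition~3.2 of their paper), where the substantive implications are that unboundedness forces some hyperplane to be skewered (via the double skewering lemma machinery) and that single orbit of hyperplanes upgrades ``some'' to ``every'' and to essentiality. If you want to supply a proof, that is the argument you would need to reconstruct; the chain-of-hyperplanes and transfer-homomorphism construction you sketched belongs to Section~\ref{4} of the paper, where a version of it does appear in the proof of Proposition~\ref{no facing triples to action on Z}.
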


Suppose that $\h \subseteq C$ is a hyperplane and $g \in G$ is a hyperbolic isometry which does not skewer $\h$. If some (equivalently every) axis of $g$ lies in a neighbourhood of $\h$ then $g$ is said to be \textit{parallel} to $\h$. Otherwise $g$ is \textit{peripheral} to $\h$.

\begin{lemma}[Caprace-Sageev, {\cite[Lemma 4.4]{Caprace-Sageev}}] \label{CS peripheral}
    If $C$ is a finite dimensional CAT(0) cube complex, $\h \subseteq C$ is a hyperplane and $g$ is a hyperbolic isometry of $C$ which is peripheral to $\h$ then there exists $n \in \mathbb{N}$ such that $g^n\h \cap \h = \emptyset$.
\end{lemma}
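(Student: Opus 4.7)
The plan is to argue by contradiction: assume that $g^n \h \cap \h \neq \emptyset$ for every $n \geq 1$, and derive that some power of $g$ stabilises $\h$, which will force an axis of $g$ to live in a bounded neighbourhood of $\h$, contradicting the peripheral hypothesis.

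The first step is to upgrade the assumption to the statement that the whole $g$-orbit of $\h$ consists of pairwise intersecting hyperplanes. Indeed, for any $a,b \in \mathbb{Z}$ with $a > b$, the assumption gives $g^{a-b}\h \cap \h \neq \emptyset$, and applying the isometry $g^b$ yields $g^a \h \cap g^b \h \neq \emptyset$. So every two hyperplanes in $\{g^n \h : n \in \mathbb{Z}\}$ meet. Two distinct hyperplanes in a CAT(0) cube complex that meet must cross transversely, and a family of $k$ pairwise transverse hyperplanes gives rise to a $k$-cube. Since $C$ is finite dimensional, the family $\{g^n \h\}_{n \in \mathbb{Z}}$ therefore contains at most $\dim(C)$ distinct elements. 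A pigeonhole argument then produces an integer $n > 0$ such that $g^n \h = \h$.

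The second step exploits this stabilisation. The element $g^{2n}$ preserves $\h$ and, by orientation considerations on the two halfspaces, also preserves each of $\h^+$ and $\h^-$. Let $\gamma$ be a (CAT(0), say) geodesic axis of $g$, which is also an axis of $g^{2n}$, and let $\tau$ be the $\ell^2$ translation length. Since $g^{2n}$ is an isometry fixing $\h$ setwise,
\[
d\bigl(\gamma(t + 2n\tau), \h\bigr) \;=\; d\bigl(g^{2n}\gamma(t), g^{2n}\h\bigr) \;=\; d(\gamma(t), \h)
\]
for all $t \in \mathbb{R}$. Hence the continuous function $t \mapsto d(\gamma(t), \h)$ is periodic and therefore bounded, so $\gamma$ is contained in a bounded neighbourhood of $\h$. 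This contradicts the fact that $g$ is peripheral to $\h$, concluding the argument.

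The main obstacle is the finite-dimensionality/pigeonhole step: one must be confident that the family $\{g^n\h\}$ cannot consist of infinitely many distinct, pairwise intersecting hyperplanes, and this is exactly where the hypothesis $\dim(C) < \infty$ enters in an essential way. Once the orbit is known to be finite, the isometric contradiction with peripherality is a short computation.
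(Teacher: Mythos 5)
The paper does not prove this lemma itself --- it is imported verbatim from Caprace--Sageev (their Lemma~4.4) --- so there is no in-paper proof to compare against. Your argument is correct and is essentially the standard one: if $g^n\h\cap\h\neq\emptyset$ for all $n$, the whole orbit $\{g^n\h\}$ is pairwise intersecting, hence pairwise crossing (distinct hyperplanes that meet must cross), hence of cardinality at most $\dim(C)$ since pairwise crossing hyperplanes span a cube; pigeonhole gives $g^n\h=\h$, and the periodicity of $t\mapsto d(\gamma(t),\h)$ along an axis then shows $g$ is parallel to $\h$, contradicting peripherality. The only stylistic difference from the source is that you reprove the easy implication of Caprace--Sageev's Lemma~4.3 (a power stabilising $\h$ forces parallelism) directly via the bounded periodic distance function, which is fine and in fact avoids the properness hypothesis that their Lemma~4.3 carries.
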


A \textit{facing triple} is a disjoint triple of hyperplanes $\{\h_0, \h_1, \h_2\}$ such that, for each $i \in \mathbb{Z} / 3\mathbb{Z}$, $\h_i$ does not separate $\h_{i+1}$ from $\h_{i+2}$. The classification of hyperbolic isometries is greatly simplified when $C$ has no facing triples:

\begin{lemma} \label{Euclidian isometries}
Let $C$ be a finite dimensional CAT(0) cube complex and suppose that $C$ has no facing triples. Let $g$ be a hyperbolic isometry of $C$ and $\h$ be a hyperplane. Then either $g$ skewers $\h$ or there is some $n \in \mathbb{N}$ such that $g^n\h = \h$.
\end{lemma}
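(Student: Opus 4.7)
My plan is to argue by contradiction: suppose $g$ is hyperbolic, does not skewer $\h$, and $g^n\h\neq\h$ for all $n\geq 1$; I aim to produce a facing triple among the translates $g^k\h$, contradicting the hypothesis. The starting point is the implicit Caprace-Sageev trichotomy: since $g$ does not skewer $\h$, an axis $\gamma$ of $g$ either lies in a bounded neighborhood of $\h$ (parallel) or escapes every such neighborhood (peripheral). In the peripheral case, Lemma~\ref{CS peripheral} gives some $m\geq 1$ with $g^m\h\cap\h=\emptyset$, and since $g^m$ has the same axis it is still peripheral and still fails to skewer $\h$, so I may replace $g$ by $g^m$ and assume $g\h\cap\h=\emptyset$ from the outset.

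A halfspace-orientation analysis follows. After orienting so that $g\h\subseteq\h^+$, the two halfspaces of $g\h$ consist of an ``outward'' piece strictly contained in $\h^+$ and an ``inward'' piece containing $\h\cup\h^-$. The halfspace $g\h^+=g(\h^+)$ must be one of these; if it were the outward piece then $g\h^+\subsetneq\h^+$ and $g$ would skewer $\h$, contradicting the assumption. Hence $g\h^+\supseteq\h\cup\h^-$, equivalently $g\h^-\subsetneq\h^+$. Applying $g^{-1}$ to this relation gives $g^{-1}\h^+\supsetneq\h^-$, from which the same argument (now applied to $g^{-1}$) yields $g^{-1}\h\subseteq\h^+$. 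So $g\h$ and $g^{-1}\h$ both sit strictly in $\h^+$, on the same side of $\h$.

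The candidate facing triple is $\{g^{-1}\h,\h,g\h\}$: the three are distinct by assumption, and $\h$ does not separate the outer two since both lie in $\h^+$. Pairwise disjointness reduces to $g^2\h\cap\h=\emptyset$; if this fails I would instead pass to a large power $g^N$ and consider $\{g^{-N}\h,\h,g^N\h\}$, where the peripheral property forces the projections of $g^{\pm N}\h$ to $\gamma$ to drift off to opposite infinities, guaranteeing mutual disjointness for large $N$ while keeping both outer hyperplanes in $\h^+$. The remaining no-separation conditions for $g^{\pm N}\h$ then follow by applying $g^{\pm N}$ to the orientation-reversing analysis already performed at $\h$. This produces the facing triple and the desired contradiction.

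The main obstacle is the parallel case, where Lemma~\ref{CS peripheral} is unavailable and in principle $g\h$ may even cross $\h$. Here I would exploit that each translate $g^k\h$ sits at the fixed distance $d(\gamma,\h)$ from the axis (since $g^k\gamma=\gamma$), so the entire orbit lies in a bounded tubular neighborhood of $\gamma$; combined with finite-dimensionality of $C$ and the no-facing-triples hypothesis, I would argue that only finitely many hyperplanes can occupy such a configuration, forcing $g^n\h=\h$ for some $n$. The most delicate point throughout is verifying the facing triple property cleanly---in particular ruling out that distinct translates $g^i\h$ and $g^j\h$ might cross each other rather than remain disjoint---which is where I expect the bulk of the technical work.
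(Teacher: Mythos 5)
Your overall strategy coincides with the paper's: split into the Caprace--Sageev trichotomy (skewering / parallel / peripheral) and, in the peripheral case, use Lemma~\ref{CS peripheral} to manufacture a facing triple from $\langle g\rangle$-translates of $\h$ (the paper uses $\{\h, g^n\h, g^{2n}\h\}$, which is your triple $\{g^{-n}\h,\h,g^{n}\h\}$ translated by $g^{n}$). That part of your argument is in the right spirit, though the pairwise-disjointness and non-separation checks you defer (``pass to a large power $N$'') are not automatic: Lemma~\ref{CS peripheral} produces \emph{some} $n$ with $g^n\h\cap\h=\emptyset$, and it needs a genuine argument that one can then arrange all three translates to be pairwise disjoint.

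The real gap is the parallel case. The paper disposes of it by quoting \cite[Lemma~4.3]{Caprace-Sageev}, which says that in a \emph{proper} CAT(0) cube complex a hyperbolic isometry is parallel to $\h$ if and only if some power of it stabilises $\h$; properness is supplied by Hagen's result that a finite dimensional cube complex with no facing triples is proper. Your substitute argument does not work as stated. First, ``parallel'' means the axis $\gamma$ lies in a bounded neighbourhood of $\h$, i.e.\ $\gamma\subseteq\mathcal{N}_D(\h)$; the identity $d(\gamma,g^k\h)=d(\gamma,\h)$ only says each translate \emph{meets} a fixed neighbourhood of $\gamma$, not that the orbit is contained in a bounded tubular neighbourhood of $\gamma$, so your containment claim is unjustified (the inclusion goes the wrong way). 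Second, even granting such containment, a bounded neighbourhood of an unbounded geodesic can contain infinitely many hyperplanes, so ``only finitely many hyperplanes can occupy such a configuration'' does not follow from finite dimensionality alone; you would need to extract either an infinite pairwise-crossing family (to contradict $\dim C<\infty$) or a facing triple from an infinite orbit, and no mechanism for doing so is given. Without this, the conclusion $g^n\h=\h$ in the parallel case --- which is the entire content of the ``or'' branch of the lemma --- is not established. To repair it, either invoke the Caprace--Sageev parallelism lemma together with properness, or give a direct argument that an infinite $\langle g\rangle$-orbit of a hyperplane to which $g$ is parallel forces one of the two forbidden configurations.
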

\begin{proof}
First note that $C$ is proper by \cite[Corollary~3.9]{Hagen}. In particular, by \cite[Lemma~4.3]{Caprace-Sageev}, the isometry $g$ is parallel to $\h$ if and only if there is a power of $g$ which stabilises $\h$.
Moreover, it follows from Lemma~\ref{CS peripheral} that, if $g$ is peripheral to $\h$, then there is some $n \in \mathbb{N}$ such that $\{\h, g^n \h, g^{2n}\h\}$ is a facing triple. Thus if $g$ does not skewer $\h$ then it must be parallel to it.
\end{proof}

\section{Narrow spaces} \label{narrow quotients}

 Given a connected graph $Y$, let $V(Y)$ be the vertex set of $Y$, let $d_Y$ (or $d$ if there is no risk of confusion) denote the path metric on $V(Y)$ where each edge has length 1 and let $B_Y(y,r)$ be the set of vertices with distance at most $r$ from a vertex $y \in V(Y)$. A subset $Z \subseteq V(Y)$ is said to be $\mu$\textit{-coarsely connected} for some $\mu \geq 1$ if, for any $z,z' \in Z$, there exists $\{z_0, \dots, z_n\} \subseteq Z$ such that $z_0 = z$, $z_n = z'$ and $d(z_i,z_{i+1}) \leq \mu$ for each $i \in \{0, \dots, n-1\}$. 

 \begin{definition} \label{narrow defn}
 An unbounded connected graph $Y$ is \textit{narrow} if, for each $\mu \geq 1$, there exists $L(\mu) \geq 1$ such that, if $Y_1, \dots, Y_{L(\mu)+1} \subseteq V(Y)$ are unbounded and $\mu$-coarsely connected, then there exists $i \neq j$ with $Y_i \cap Y_j \neq \emptyset$.
 \end{definition}

\begin{proposition} \label{linear growth implies narrow}
    Let $Y$ be an unbounded connected graph and suppose that at least one of the following holds.
    \begin{itemize}
        \item[(a)] $Y$ has linear growth.
        \item[(b)] $Y$ is uniformly locally finite and quasi-isometric to a finitely ended tree.
    \end{itemize}
    Then $Y$ is narrow.
\end{proposition}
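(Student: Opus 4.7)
The plan is to handle the two cases separately, using a growth-based counting argument for (a) and a tree-theoretic bottleneck construction for (b).

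For case (a), I would fix a basepoint $o \in V(Y)$ and use the linear growth bound $|B_Y(o, r)| \leq Ar + B$. Given disjoint unbounded $\mu$-coarsely connected $Y_1, \dots, Y_N$, choose for each $i$ a vertex $p_i \in Y_i$ closest to $o$ and set $R^* = \max_i d(o, p_i)$. A discrete intermediate-value argument along a $\mu$-path in $Y_i$ from $p_i$ out to infinity produces, for each integer $k$ with $R^*/\mu \leq k \leq K/\mu$, a vertex of $Y_i$ at distance in $[k\mu, (k+1)\mu)$ from $o$; these vertices are distinct across different $k$ and lie in $B_Y(o, K + \mu)$. Summing over $i$ using disjointness and comparing with $|B_Y(o, K + \mu)| \leq A(K + \mu) + B$, then letting $K \to \infty$, yields $N \leq A\mu$.

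For case (b), let $f : Y \to T$ be a $(\lambda, C)$-quasi-isometry to a tree $T$ with $k$ ends. The number of ends is a quasi-isometry invariant for uniformly locally finite graphs, so $Y$ also has $k$ ends, and each unbounded $Y_i$ accumulates at one of them; by pigeonhole over these $k$ ends it suffices to bound, for a fixed end $\xi$, the number of disjoint unbounded $\mu$-coarsely connected subsets of $Y$ accumulating at $\xi$. The key step will be a bottleneck lemma: fix a geodesic ray $r_\xi : \mathbb{N} \to T$ to the end of $T$ corresponding to $\xi$, and for each $n$ set
\[ S_n := \{ y \in V(Y) : d_T(f(y), r_\xi(n-1)) \leq \lambda + C \text{ or } d_T(f(y), r_\xi(n)) \leq \lambda + C \}. \]
The claim is that (i) $|S_n| \leq D$ uniformly in $n$, for a constant $D$ depending only on $\lambda, C$ and the uniform local finiteness of $Y$, and (ii) $V(Y) \setminus S_n$ decomposes according to which side of the cut edge $e_n = [r_\xi(n-1), r_\xi(n)]$ in $T$ the $f$-image of the vertex lies on. Claim (i) uses that any two $y, y' \in S_n$ whose $f$-images are within $\lambda + C$ of a common endpoint of $e_n$ have $d_Y(y, y')$ bounded by a constant depending only on $\lambda, C$ via the QI inequality, after which uniform local finiteness of $Y$ bounds the count. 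Claim (ii) uses that adjacent $y, y' \in V(Y)$ have $d_T(f(y), f(y')) \leq \lambda + C$, so any $V(Y)$-path with endpoints on opposite sides of $e_n$ has a consecutive pair whose $T$-geodesic passes through both $r_\xi(n-1)$ and $r_\xi(n)$, forcing both into $S_n$.

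Granted the bottleneck lemma, for each $Y_i$ accumulating at $\xi$ the closest point $p_i \in Y_i$ to $o$ lies on the basepoint side of $S_n$ for all $n$ sufficiently large (since $f(p_i)$ is at bounded $T$-distance from $f(o)$ while $r_\xi(n)$ drifts to infinity), while accumulation at $\xi$ gives points of $Y_i$ on the other side; $\mu$-coarse connectivity then forces $Y_i \cap \mathcal{N}_\mu(S_n) \neq \emptyset$. Disjointness together with uniform local finiteness bounds the number of such $Y_i$ by $|\mathcal{N}_\mu(S_n)| \leq D \cdot \max_{y \in Y} |B_Y(y, \mu)|$, a constant depending only on $\mu$; multiplying by $k$ over the ends of $Y$ produces the required bound $L(\mu)$. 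The main obstacle I expect is the quasi-isometry bookkeeping in the bottleneck lemma, particularly the uniform cardinality bound on $S_n$, which genuinely combines uniform local finiteness of $Y$ with the fact that a single edge suffices to cut off any given end in $T$.
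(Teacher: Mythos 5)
Your proof is correct and follows essentially the same strategy as the paper: an annulus-counting argument against the linear growth bound for case (a), and for case (b) a pigeonhole over the finitely many ends combined with a uniformly finite bottleneck near each end which every unbounded $\mu$-coarsely connected set accumulating there must come within $\mu$ of, so that disjointness plus uniform local finiteness bounds their number. The only substantive difference is that in (b) you point the quasi-isometry from $Y$ to the tree and pull back neighbourhoods of the endpoints of a cut edge, whereas the paper maps the tree to $Y$ and uses balls around the images of separating vertices of a ray; the two constructions play identical roles.
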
 
\begin{proof}
    First suppose that (a) holds and let $C \geq 1$ and $y_0 \in V(Y)$ be such that $|B_Y(y_0,n)| \leq Cn$ for all $n \in \mathbb{N}$. Fix $\mu \geq 1$ and let $L \coloneqq \mu(C+1)$. Suppose that $Y_1, \dots, Y_{L+1} \subseteq V(Y)$ are pairwise disjoint unbounded $\mu$-coarsely connected sets of vertices, ordered such that $d(y_0, Y_1) \leq d(y_0, Y_2) \leq  \dots \leq d(y_0,Y_{L+1})$. For each $i \in \{1, \dots, L+1\}$ and $j \geq d(y_0, Y_i) + \mu$ we have $Y_i \cap (B_Y(y_0, j) - B_Y(y_0,j-\mu)) \neq \emptyset$. Therefore if $n \geq d(y_0, Y_{L+1}) + \mu$ then $|B_Y(y_0,n) - B_Y(y_0,n-\mu)| \geq L+1$. Thus, if $k = d(y_0,Y_{L+1})$, then for any $n \geq k(C+1)$
    \[ |B_Y(y_0,k+n\mu)| \geq n(L+1) \geq Ln + k(C+1) = (C+1)(k+n\mu) \]
    which is a contradiction.

    Now suppose that (b) holds and let $T$ be a finitely ended tree and $f:T \rightarrow Y$ be a $\lambda$-quasi-isometry for some $\lambda \geq 1$. Let $D = 3\lambda^3 + \lambda^2 + 3\lambda$ and let $\overline{f}: T \cup \partial T \rightarrow Y \cup \partial Y$ denote the extension of $f$ to the visual compactification of $T$. We henceforth identify the ends of $T$ with the points in its visual boundary in the natural way. 
    Let $e \in \mathbb{N}$ be the number of ends of $T$ and fix $\mu \geq 1$. Let $R \geq 1$ be such that any ball of radius $D + \mu$ in $Y$ contains at most $R$ vertices and let $L \coloneqq eR$. Let $\xi_1, \dots, \xi_e$ denote the ends of $T$ and note that $\overline{f}(\xi_1), \dots \overline{f}(\xi_e)$ are the ends of $Y$. Suppose that $Y_1, \dots, Y_{L+1} \subseteq Y$ are unbounded $\mu$-coarsely connected subspaces, ordered such that the following holds. There exists $1 = L_0 \leq L_1 \leq \dots \leq L_{e-1} \leq L_e = L+2$ such that, for each $i \in \{1, \dots, e\}$, if $j \in [L_{i-1}, L_i)$, then there is a subset of $Y_j$ which converges towards $\overline{f}(\xi_i)$.
    For each $i \in \{1, \dots, e\}$ let $\gamma_i \subseteq T$ be a geodesic ray which is a representative of $\xi_i$. Up to removing a finite length segment from $\gamma_i$, we can assume that each vertex of $\gamma_i$ separates $T$ into at least two unbounded connected components. It follows that for each vertex $v \in \gamma_i$, the ball of radius $D$ around $f(v)$ separates $Y$ into at least two unbounded connected components. It follows that, if $j \in [L_{i-1},L_i)$, then, for all but finitely many $v \in \gamma_i$, the subspace $Y_j$ contains vertices lying in at least two distinct connected components of $Y - B_Y(f(v),D)$. Therefore there exists a vertex $v_i \in Y$ such that, for all $j \in [L_{i-1},L_i)$, the subspace $Y_j$ contains vertices lying in at least two distinct connected components of $Y - B_Y(v_i,D)$.
    If $B_Y(v_i,D) \cap Y_j = \emptyset$ then there are vertices $x,y \in Y_j$ lying in different connected components of $Y - B_Y(v_i,D)$ such that $d(x,y) \leq \mu$. Let $z \in B_Y(v_i,D)$ be a vertex which lies on a geodesic from $x$ to $y$. Then $d(v_i,x) \leq d(v_i,z) + d(z,x) < D + \mu$. Therefore $Y_j \cap B_Y(v_i, D + \mu) \neq \emptyset$ for all $j \in [L_{i-1}, L_i)$. Since the number of the vertices in $\cup_{i=1}^e B_Y(v_i,D + \mu)$ is at most $L$, there exists $j \neq k$ such that $Y_j \cap Y_k \neq \emptyset$.
\end{proof}

\begin{corollary} \label{virtual fibers are narrow}
    Let $G$ be a finitely generated group and let $H \leq G$ be a virtual fiber subgroup. Let $S$ be any finite generating set of $G$. Then $H \bs \Cay(G,S)$ is quasi-isometric to either $\mathbb{Z}$ or $\mathbb{N}$. In particular $H \bs \Cay(G,S)$ is narrow.
\end{corollary}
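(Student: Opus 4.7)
The plan is to pass to a finite-index subgroup $H' \leq H$ for which the Schreier graph is transparently a quasi-line, and then handle the finite quotient $H/H'$ via its action on ends.

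By definition we have finite-index subgroups $H' \leq H$ and $G' \leq G$ with $H' \unlhd G'$ and $G'/H' \cong \mathbb{Z}$. The first step is to replace $G'$ by its normal core $\bigcap_{g \in G} g G' g^{-1}$ and $H'$ by $H \cap G'$. This preserves the virtual fiber conditions (the new $G'/H'$ is a finite-index, hence infinite, subgroup of $\mathbb{Z}$, so still $\mathbb{Z}$), while additionally giving $G' \unlhd G$ and hence $H' = H \cap G' \unlhd H$.

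Next I would show that $H' \bs \Cay(G,S)$ is quasi-isometric to $\mathbb{Z}$. Using a Reidemeister--Schreier generating set $S'$ of $G'$ built from a transversal $T$ of $G'$ in $G$, the metrics $d_{S'}$ and $d_S$ are bi-Lipschitz equivalent on $G'$: the standard rewriting gives $d_{S'} \leq d_S$ on $G'$, and conversely $d_S \leq (\max_{s' \in S'}|s'|_S)\, d_{S'}$. It follows that the natural injection $H' \bs \Cay(G', S') \hookrightarrow H' \bs \Cay(G, S)$ induced by $G' \hookrightarrow G$ is a quasi-isometric embedding, and it is coarsely surjective because any coset $H' g't$ with $g' \in G'$, $t \in T$ lies within distance $|t|_S$ of $H'g'$. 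Hence it is a quasi-isometry, and $H' \bs \Cay(G', S') = \Cay(\mathbb{Z}, \overline{S'})$ is a connected, locally finite Cayley graph of $\mathbb{Z}$, hence quasi-isometric to $\mathbb{Z}$.

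Since $H' \unlhd H$, the finite group $F := H/H'$ acts on $H' \bs \Cay(G, S)$ by graph automorphisms via $aH' \cdot H'g = H'ag$, with quotient $H \bs \Cay(G,S)$. The induced action on the two ends factors through a homomorphism $F \to \mathbb{Z}/2$. If this is trivial, then conjugating by the quasi-isometry to $\mathbb{Z}$, each $f \in F$ becomes a quasi-isometry of $\mathbb{Z}$ fixing both ends, hence coarsely a translation; finite order then forces it to be coarsely trivial, so $F$-orbits have uniformly bounded diameter and the quotient is quasi-isometric to $\mathbb{Z}$. Otherwise some element of $F$ swaps the two ends, so the quotient has a single end and inherits linear growth from the $1$-Lipschitz, $|F|$-to-one orbit map, hence is quasi-isometric to $\mathbb{N}$. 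The ``in particular'' statement then follows from Proposition~\ref{linear growth implies narrow}.

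I expect the main obstacle to be this last paragraph, which rests on two routine-but-nontrivial ingredients: that a finite-order quasi-isometry of $\mathbb{Z}$ fixing both ends is coarsely the identity, and that any connected, locally finite, one-ended graph of linear growth is quasi-isometric to $\mathbb{N}$.
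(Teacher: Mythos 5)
Your overall strategy (reduce to $H'\unlhd H$ with $H'\bs\Cay(G,S)$ a quasi-line, then quotient by the finite group $F=H/H'$) is sound up to the last paragraph, and the reduction steps (normal core, Reidemeister--Schreier comparison of metrics, well-definedness of the $F$-action) all check out. However, the final step of your end-swapping case rests on a claim that is false: it is \emph{not} true that every connected, locally finite, one-ended graph of linear growth is quasi-isometric to $\mathbb{N}$. Take $Y$ to be the ray $\mathbb{N}$ with a pendant path of length $k$ attached at the vertex $2^k$ for each $k\geq 2$. This is a locally finite tree with exactly one end, and $|B_Y(0,r)|\leq r+1+(\log_2 r)^2$, so it has linear growth. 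But $Y$ contains, for every $k$, an isometrically embedded tripod with three legs of length $k$ centred at $2^k$, and such tripods admit no quasi-isometric embeddings into $\mathbb{Z}$ with uniform constants: two of the three leg-images must leave $q(o)$ in the same direction of $\mathbb{Z}$, and since each image is a $(\lambda+c)$-dense chain they must pass within $2(\lambda+c)$ of each other at height about $k/(2\lambda)$, forcing two points on distinct legs, each at distance roughly $k/(2\lambda^2)$ from the centre (hence at distance roughly $k/\lambda^2$ from each other), to have images at bounded distance. So $Y$ is not quasi-isometric to $\mathbb{N}$, and your cited ingredient cannot be used as a black box. (There is also a smaller slip earlier: a quasi-isometry of $\mathbb{Z}$ fixing both ends need not be coarsely a translation, e.g.\ $n\mapsto 2n$; what saves your first case is the finite order itself, since an orientation-preserving $(\lambda,c)$-quasi-isometry $\phi$ of $\mathbb{Z}$ with $\phi^m$ coarsely trivial is forced to have bounded displacement.)

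The gap is repairable because your $X_H$ is not an arbitrary one-ended linear-growth graph: it is the quotient of a quasi-line by a finite automorphism group containing an end-swapping element $\tau$. Applying your first case to the index-two end-preserving subgroup $F_0\leq F$ reduces to quotienting a quasi-line $Z$ by an order-two end-swapping automorphism $\sigma$; one then finds a coarse fixed point $z_*$ (the sign of $h(\sigma z)-h(z)$ changes along a path between the ends, for $h$ a quasi-isometry to $\mathbb{Z}$) and checks that $[z]\mapsto d(z,z_*)$ induces a quasi-isometry $Z/\sigma\to\mathbb{N}$. This still needs an argument and is the part you cannot wave at. The paper's proof avoids the ends dichotomy altogether: it works with the finite-degree covering $\rho\colon H'\bs\Cay(G,S)\to H\bs\Cay(G,S)$, shows $\rho$ is a quasi-isometry when its fibres have uniformly bounded diameter, and otherwise invokes \cite[Lemma~3.14]{AG23} to produce a geodesic ray in $X_H$ with two lifts at infinite Hausdorff distance, which exhibits $X_H$ as a quasi-ray.
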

\begin{proof}
    Let $H' \leq H$ and $G' \leq G$ be finite index subgroups such that $H' \unlhd G'$ and $G' / H' = \mathbb{Z}$. Let $X_H \coloneqq H \bs \Cay(G,S)$ and $X_{H'} \coloneqq H' \bs \Cay(G,S)$. Then, for any finite generating set $S'$ of $G'$, the graph $X_{H'}$ is quasi-isometric to $H' \bs \Cay(G',S')$, which is a quasi-line. Moreover the quotient map $\rho: X_{H'} \rightarrow X_H$ defined by $\rho(H'g) = Hg$ is a finite degree covering map. Suppose that there exists $K \in \mathbb{N}$ such that $\Diam \rho^{-1}(x) \leq K$ for all $x \in X_H$. Let $x,y \in X_{H'}$ and let $\gamma \subseteq X_H$ be a geodesic from $\rho(x)$ to $\rho(y)$. Let $y' \in X_{H'}$ be the endpoint of the lift of $\gamma$ based at $x$. Then $d_{X_{H'}}(x,y) \leq d_{X_{H'}}(x,y') + d_{X_{H'}}(y',y) \leq d_{X_H}(\rho(x),\rho(y)) + K$. Since $\rho$ is a surjective 1-Lipschitz map, this implies that $\rho$ is a quasi-isometry. If $\Diam \rho^{-1}(x)$ is not uniformly bounded then \cite[Lemma~3.14]{AG23} implies that there is a geodesic ray $\gamma \subseteq X_H$ and two lifts $\gamma_1, \gamma_2 \subseteq X_{H'}$ of $\gamma$ such that the Hausdorff distance between $\gamma_1$ and $\gamma_2$ is infinite. Then $X_{H'}$ is quasi-isometric to the union $\gamma_1 \cup \gamma_2$ (equipped with the induced metric from $X_{H'}$) and it follows that $X_H$ is quasi-isometric to $\gamma$. Proposition~\ref{linear growth implies narrow} then implies that $X_H$ is narrow.
\end{proof}

\begin{proposition} \label{narrow QI}
    Let $Y,Y'$ be unbounded connected graphs. Suppose that $Y'$ is narrow and that there exists a constant $C \geq 1$ and a metrically proper injective map $f:V(Y) \rightarrow V(Y')$ such that $d_{Y'}(f(x),f(y)) \leq C d_Y(x,y) + C$ for all $x,y \in Y$. Then $Y$ is narrow.
\end{proposition}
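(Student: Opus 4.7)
The plan is to pull back the narrowness of $Y'$ to $Y$ via $f$. Fix $\mu \geq 1$ and set $\mu' \coloneqq C\mu + C$. I aim to show that $L_Y(\mu) \coloneqq L_{Y'}(\mu')$ works, where $L_{Y'}$ is the function witnessing narrowness of $Y'$.

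Given unbounded $\mu$-coarsely connected subsets $Y_1, \dots, Y_{L_{Y'}(\mu')+1} \subseteq V(Y)$, the first step is to verify that the images $f(Y_1), \dots, f(Y_{L_{Y'}(\mu')+1}) \subseteq V(Y')$ are unbounded and $\mu'$-coarsely connected. Unboundedness follows from the metric properness of $f$: if $Y_i$ contains vertices $y_n$ with $d_Y(y_0, y_n) \to \infty$, then $d_{Y'}(f(y_0), f(y_n)) \to \infty$. Coarse connectivity follows from the upper bound on $f$: if $z_0, \dots, z_k \in Y_i$ is a $\mu$-chain between two vertices of $Y_i$, then $f(z_0), \dots, f(z_k)$ is a chain in $f(Y_i)$ with consecutive $Y'$-distances at most $C\mu + C = \mu'$.

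The second step is to apply the narrowness hypothesis on $Y'$ to the family $\{f(Y_i)\}$, which yields indices $i \neq j$ with $f(Y_i) \cap f(Y_j) \neq \emptyset$. Since $f$ is injective, this gives $Y_i \cap Y_j \neq \emptyset$, completing the proof.

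There is no real obstacle here: the argument is a direct transfer of narrowness along $f$, using injectivity to preserve intersection patterns, metric properness to preserve unboundedness, and the Lipschitz-type upper bound to preserve coarse connectivity (with a controlled change of parameter). The only subtlety worth flagging is that the definition of narrowness does not require the sets to be pairwise disjoint a priori, so no extra work is needed to handle coincidences of images.
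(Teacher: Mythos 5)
Your proof is correct and follows essentially the same route as the paper: push the sets forward along $f$, note that the images are unbounded (by metric properness) and $(C\mu+C)$-coarsely connected (by the Lipschitz-type bound), apply the narrowness of $Y'$, and pull the intersection back via injectivity. The paper's proof is this argument verbatim, with $L_Y(\mu) = L_{Y'}(C\mu+C)$.
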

\begin{proof}
    For each $\mu$ let $L(\mu)$ be such that, if $Y_1', \dots, Y_{L(\mu)+1}' \subseteq V(Y')$ are unbounded and $\mu$-coarsely connected, then there exists $i \neq j$ with $Y_i' \cap Y_j' \neq \emptyset$. Fix $\lambda \geq 1$ and let $Y_1, \dots, Y_{L(C\lambda+C)+1} \subseteq V(Y)$ be unbounded and $\lambda$-coarsely connected. Then $f(Y_1), \dots,$\break$ f(Y_{L(C\lambda+C)+1}) \subseteq V(Y')$ are unbounded and $(C\lambda+C)$-coarsely connected so there exists $i \neq j$ such that $f(Y_i) \cap f(Y_j) \neq \emptyset$. Since $f$ is injective, this implies that $Y_i \cap Y_j \neq \emptyset$.
\end{proof}

  If $S,S'$ are finite generating sets of a group $G$ and $H \leq G$ is a subgroup then the identity map $\id: V(H \bs \Cay(G,S)) \rightarrow V(H \bs \Cay(G,S'))$ is a bijective quasi-isometry. Therefore the above proposition implies that $H \bs \Cay(G,S)$ is narrow if and only if $H \bs \Cay(G,S')$ is narrow. We say that $H\bs G$ is \textit{narrow} if $H \bs \Cay(G,S)$ is narrow for some (equivalently any) finite $G$-generating set $S$.

\begin{proposition} \label{narrow subgroups}
    Let $G$ be a finitely generated group and $H \leq G$ be such that $H \bs G$ is narrow. If $H \leq K \leq G$ is such that $[K:H] < \infty$ then $K \bs G$ is narrow.
\end{proposition}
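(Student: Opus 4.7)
The plan is to apply Proposition~\ref{narrow QI} to a suitable section $\sigma$ of the natural $1$-Lipschitz quotient map $q \colon H \bs \Cay(G,S) \to K \bs \Cay(G,S)$, where $S$ is any fixed finite generating set of $G$. I first observe that $K \bs G$ is unbounded: narrowness of $H \bs G$ implies $[G:H] = \infty$, which combined with $[K:H] < \infty$ gives $[G:K] = \infty$.

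Next, I will fix coset representatives $k_1, \dots, k_n$ for $H$ in $K$, let $D \coloneqq \max_i |k_i|_S$, and for each $Kg \in K \bs G$ pick a representative $\tilde g \in G$, defining $\sigma(Kg) \coloneqq H \tilde g$. Since $q \circ \sigma = \id$, the map $\sigma$ is injective, and metric properness follows from the inequality $d_{K \bs G}(Kg, Kg') \leq d_{H \bs G}(\sigma(Kg), \sigma(Kg'))$ coming from the fact that $q$ is $1$-Lipschitz.

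The main step is to verify that $\sigma$ is Lipschitz. Given an edge between $Kg_1$ and $Kg_2$ in $K \bs \Cay(G,S)$, I will lift it to an edge $(g_1', g_2')$ of $\Cay(G,S)$. Each $g_i'$ differs from $\tilde g_i$ by an element of $K$, so $H g_i' = H k_{j(i)} \tilde g_i$ for some $j(i) \in \{1,\dots,n\}$, giving $d_{H \bs G}(H \tilde g_i, H g_i') \leq D$. Combining with $d_{H \bs G}(H g_1', H g_2') \leq 1$ and the triangle inequality, one gets $d_{H \bs G}(\sigma(Kg_1), \sigma(Kg_2)) \leq 2D + 1$, and iterating along geodesics shows that $\sigma$ is $(2D+1)$-Lipschitz. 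Proposition~\ref{narrow QI} then gives the conclusion.

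The construction is essentially bookkeeping; the only mildly fiddly point is keeping track of coset representatives in the Lipschitz estimate.
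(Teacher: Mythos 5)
Your strategy of feeding a section of the covering map $q\colon H\backslash\Cay(G,S)\to K\backslash\Cay(G,S)$ into Proposition~\ref{narrow QI} is reasonable, and the injectivity and properness observations are fine, but the Lipschitz estimate has a genuine gap. If $g_i'$ and $\tilde g_i$ lie in the same coset $Kg_i$, then $g_i'=\kappa\tilde g_i$ with $\kappa\in K$ acting on the \emph{left}, and
\[
d_{H\backslash G}(H\tilde g_i,\,H\kappa\tilde g_i)\;=\;\min_{h\in H}\,|\tilde g_i^{-1}h\kappa\tilde g_i|_S ,
\]
which involves a conjugate of $\kappa$ by $\tilde g_i$ and is not bounded by $|\kappa|_S$: right multiplication moves a vertex of a Schreier graph a distance at most the word length, but left multiplication does not. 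Concretely, take $G=D_\infty=\langle a,b\mid a^2=b^2=1\rangle$, $H=\{1\}$ (so $H\backslash G$ is a line, hence narrow) and $K=\langle a\rangle$: the fibre of $q$ over $K(ba)^n$ is $\{(ba)^n,\,a(ba)^n\}$, and these two points are at distance $|(ab)^na(ba)^n|_S=4n+1$ in $\Cay(G,S)$. So the fibres of $q$ can have unbounded diameter, your bound $d_{H\backslash G}(H\tilde g_i,Hg_i')\le D$ fails, and a carelessly chosen section need not be coarsely Lipschitz. (The paper is explicitly aware of this phenomenon: the proof of Corollary~\ref{virtual fibers are narrow} devotes a separate case to finite covers whose fibres are unbounded.)

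The paper's proof sidesteps this by never choosing a section. Given unbounded $\mu$-coarsely connected sets $Y_1,\dots,Y_{L+1}$ in $K\backslash G$, it passes to the generating set $S_\mu=\{g:|g|_S\le\mu\}$ so that each $Y_i$ spans a connected subgraph of $X_{K,S_\mu}$, takes a spanning tree $T_i$, and lifts $T_i$ through the covering $X_{H,S_\mu}\to X_{K,S_\mu}$. The lifts are connected automatically (paths lift through covers) and unbounded because the cover has finite degree, so narrowness of $X_{H,S_\mu}$ forces two lifts to meet, and the intersection point projects to a point of $Y_i\cap Y_j$. If you want to keep your framework, you would need to prove that a coarsely Lipschitz section of $q$ exists, which is not automatic and is essentially as delicate as the statement itself; lifting subsets rather than points is the way around this.
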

\begin{proof}
For any finite $G$-generating set $S$ let $X_{H,S} \coloneqq H \bs \Cay(G,S)$ and let $X_{K,S} \coloneqq K \bs \Cay(G,S)$. There is a finite degree covering map $\pi_S: X_{H,S} \rightarrow X_{K,S}$ and in particular any $X_{K,S}$ is unbounded. Fix a finite $G$-generating set $S$. We will show that $X_{K,S}$ is narrow. Let $\mu \geq 1$ and let $S_\mu \coloneqq \{g \in G : |g|_S \leq \mu\}$, where $|.|_S$ denotes word length with respect to $S \cup S^{-1}$. The Schreier graph $X_{H, S_\mu}$ is narrow so there exists $L \geq 1$ such that, for any unbounded 1-connected subsets $Z_1, \dots, Z_{L+1} \subseteq V(X_{H,S_\mu})$, there exists $i\neq j$ such that $Z_i \cap Z_j \neq \emptyset$. Let $Y_1, \dots, Y_{L+1} \subseteq V(X_{K,S})$ be unbounded and $\mu$-coarsely connected. For each $i \in \{1, \dots, L+1\}$, the subgraph of $X_{K,S_\mu}$ induced by $Y_i$ is connected so it admits a spanning tree $T_i$. Let $T_i' \subseteq X_{H,S_\mu}$ be a lift of $T_i$ with respect to $\pi_{S_\mu}$. Then $V(T_1'), \dots, V(T_{L+1}') \subseteq V(X_{H,S_\mu})$ are unbounded and 1-connected subsets of $X_{H,S_\mu}$ so there exists $i \neq j$ such that there exists $x \in V(T_i') \cap V(T_j')$. Thus $\pi_{S_\mu}(x) \in V(T_i) \cap V(T_j)= Y_i \cap Y_j$.
\end{proof}

The following is a generalisation of Proposition~3.15 in \cite{AG23}.

\begin{proposition} \label{index restrictions}
    Let $G$ be a finitely generated group and let $H \leq K \leq G$ be a tower of subgroups such that $H \bs G$ is narrow. Then either $[G:K] < \infty$ or $[K:H] < \infty$.
\end{proposition}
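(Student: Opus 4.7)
The plan is to prove the contrapositive: assuming both $[G:K] = \infty$ and $[K:H] = \infty$, I would produce infinitely many pairwise disjoint, unbounded, $1$-coarsely connected subsets of $V(H \bs \Cay(G,S))$ for some (equivalently any) finite symmetric generating set $S$ of $G$, directly contradicting narrowness of $H \bs G$.

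The main tool would be the natural quotient map $\pi: H \bs \Cay(G,S) \to K \bs \Cay(G,S)$ sending $Hg \mapsto Kg$. I would first verify that, viewed as a map of $S$-labelled multigraphs, $\pi$ is a graph covering of degree $[K:H]$: at each vertex $Hg$, the $s$-edge to $Hgs$ projects to the $s$-edge at $Kg$ to $Kgs$, giving a bijection of stars, and the fiber over $Kg$ is $\{Hkg : k \in K\}$, which is in bijection with $H \bs K$. Consequently, paths in the base lift uniquely given a starting vertex in the cover.

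Since $[G:K] = \infty$ and $K \bs \Cay(G,S)$ is a connected, locally finite, infinite graph, König's lemma yields an injective (indeed geodesic) ray $\gamma$ in $K \bs \Cay(G,S)$ based at $K$. The fiber $\pi^{-1}(K)$ has $[K:H] = \infty$ elements, and for each $Hk$ in this fiber I would take the unique lift $\tilde\gamma_{k}$ of $\gamma$ starting at $Hk$. Lifts of injective paths through a covering are injective (applying $\pi$ collapses any would-be repetition to a repetition in $\gamma$), so each $\tilde\gamma_k$ is an injective infinite path in a locally finite graph, hence an unbounded, $1$-coarsely connected subset of $V(H \bs \Cay(G,S))$. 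Distinct lifts are disjoint as vertex sets: a shared vertex $\tilde\gamma_k(m) = \tilde\gamma_{k'}(n)$ forces $m = n$ by injectivity of $\gamma$, after which reversing and lifting the initial segment of $\gamma$ from this common vertex forces $Hk = Hk'$ by uniqueness of path lifting.

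This produces infinitely many pairwise disjoint, unbounded, $1$-coarsely connected subsets of $V(H \bs \Cay(G,S))$, and choosing any $L(1) + 1$ of them contradicts the narrowness of $H \bs G$. The only subtlety worth care is the covering-map verification, which is why I would carry it out in the category of $S$-labelled multigraphs so that loops and multi-edges cause no bookkeeping issues; past that the argument is just standard path lifting together with König's lemma.
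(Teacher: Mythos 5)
Your proposal is correct and is essentially the paper's own argument: both pass to the degree-$[K:H]$ covering map $\pi: H\bs\Cay(G,S) \to K\bs\Cay(G,S)$, lift a geodesic ray of the (infinite) base at each point of a fiber, and use the pairwise disjoint, unbounded, connected lifts to bound the fiber size by the narrowness constant $L(1)$. The extra detail you supply on the covering-map verification and the disjointness of lifts is fine but not a different route.
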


\begin{proof}
    Let $S$ be a finite generating set for $G$, let $X_H \coloneqq H \bs \Cay(G,S)$, $X_K \coloneqq K \bs \Cay(G,S)$ and let $\rho_H: \Cay(G,S) \rightarrow X_H$, $\rho_K: \Cay(G,S) \rightarrow X_K$ denote the quotient maps. Note that $\rho_H$ and $\rho_K$ are covers and that $\rho_K$ factors through a covering map $\pi: X_H \rightarrow X_K$ whose degree is $[K:H]$. Suppose that $[G:K] = \infty$, fix $\overline{x} \in X_K$ and let $\overline{\delta} \subseteq X_K$ be an infinite geodesic ray based at $\overline{x}$. For each point $x \in \pi^{-1}(\overline{x})$ there is a lift of $\overline{\delta}$ based at $x$. These lifts are pairwise disjoint and connected so $|\pi^{-1}(\overline{x})| \leq L(1)$, where $L: \mathbb{N} \rightarrow \mathbb{N}$ is a map which expresses the narrowness of $X_H$ (see Definition~\ref{narrow defn}). Thus $[K:H] \leq L(1) < \infty$.
\end{proof}

\section{Proof of Theorem \ref{main}} \label{proof}

As remarked in the introduction, we only need to prove the ascending implications of the theorem. The implication (4) $\Rightarrow$ (3) is proved in Proposition~\ref{4to3} and (3) $\Rightarrow$ (1) is proved in Proposition~\ref{3to1}. 

\medskip

Let us fix some notation for the rest of this section. If $G$ is a group with a finite generating set $S$, we denote the corresponding word metric on $G$ by $d_S$ and let $X \coloneqq \Cay(G,S)$ be the Cayley graph of $G$ with respect to $S$. If $H \leq G$ is a subgroup, then $X_H \coloneqq H \bs X$ denotes the quotient of $X$ with respect to the left action of $H$ by multiplication (i.e. the Schreier coset graph of $H$) and $\rho_H: X \rightarrow X_H$ denotes the quotient map.

\begin{proposition} \label{4to3}
    Let $G$ be a group with finite generating set $S$ and $H \leq G$ be a finitely generated subgroup such that $X_H$ is narrow.
    Then either the set of double cosets $\{HgH : g \in G\}$ is finite or $H$ has bounded packing in $G$.
\end{proposition}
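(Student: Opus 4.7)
I aim to prove the contrapositive: assuming $H$ does not have bounded packing in $G$, I show $|H \bs G / H| < \infty$.

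First, the failure of bounded packing yields $D \geq 0$ such that for each $N$ there are $N$ distinct left cosets of $H$ at pairwise set-distance $\leq D$ in $X$. Left-translating by an element of $G$ (which acts isometrically on $X$), I may assume that $H$ is one of them; the remaining cosets then lie within set-distance $D$ of $H$ and admit representatives in $HB_D$, where $B_D$ is the $S$-ball of radius $D$ around the identity. Writing such a representative as $g = hu$ with $h \in H$ and $u \in B_D$, I observe $HgH = HuH$, so among the $N$ cosets only at most $|B_D|$ distinct double cosets appear. Pigeonholing first inside each $N$-family and then over $N$ (using $|B_D| < \infty$), I extract a single $u \in B_D$ such that for every $M$ the double coset $HuH$ contains $M$ distinct left cosets $h_1 u H, \ldots, h_M u H$ pairwise at set-distance $\leq D$; since $M$ is arbitrary, $[H : H \cap uHu^{-1}] = \infty$.

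The ``partner'' Schreier graph $X^H := G / H$ is isomorphic to $X_H$ via $\phi(gH) = Hg^{-1}$, so narrowness passes between them. The left $H$-orbit of $uH$ in $V(X^H)$ consists of all left cosets of $H$ in $HuH$, hence has infinite cardinality; since $X^H$ is locally finite this orbit is unbounded, and since $H$ is finitely generated by some $T \subseteq G$ with $\mu := \max_{t \in T} |t|_S$, it is $\mu$-coarsely connected. Assuming for contradiction that $|H \bs G / H| = \infty$, I now aim to produce $L(\mu) + 1$ pairwise disjoint unbounded $\mu$-coarsely connected subsets of $V(X^H)$, contradicting narrowness of $X^H \cong X_H$. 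The plan is to pick representatives $g_1, \ldots, g_{L(\mu)+1}$ of pairwise distinct $(H, uHu^{-1})$-double cosets in $G$ and, for each $j$, exhibit a pairwise-close family of $M$ left cosets inside $H g_j u H$ (by applying the pigeonhole of Step~1 to a suitable translate of the original family); each such family forces the left $H$-orbit of $g_j u H$ in $V(X^H)$ to be infinite, and distinctness of the $H g_j u H$ guarantees the orbits are disjoint.

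\textbf{Main obstacle.} Two points require care in the construction: (i) producing $L(\mu)+1$ distinct $(H, uHu^{-1})$-double cosets, i.e., ensuring $|H \bs G / (uHu^{-1})| > L(\mu)$; and (ii) ensuring that each shifted double coset $H g_j u H$ actually contains an $M$-clique of pairwise-close left cosets. The easy case of (i) is when $[H : H \cap u^{-1} H u] < \infty$ (equivalently, the right $H$-orbit of $Hu$ in $X_H$ is finite), where the natural coarsening $H \bs G / H \to H \bs G / (uHu^{-1})$ has finite fibers, so $|H \bs G / H| = \infty$ forces $|H \bs G / (uHu^{-1})| = \infty$. The hard case is $[H : H \cap u^{-1} H u] = \infty$, meaning the double coset $HuH$ has infinite right $H$-orbit in $X_H$; this must be handled by invoking narrowness directly on the right $H$-orbits of $V(X_H)$, which allows at most $L(\mu)$ such ``two-sided infinite'' double cosets, forcing $u$ into one of finitely many possibilities and enabling a parallel argument. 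For (ii), one re-runs Step~1 on the translated families to locate the required pairwise-close cosets within each $H g_j u H$.
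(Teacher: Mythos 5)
Your Step~1 is correct and does extract real information from the failure of bounded packing: you obtain $u\in B_D$ such that, for every $M$, the double coset $HuH$ contains $M$ distinct left cosets pairwise at distance at most $D$, hence $[H:H\cap uHu^{-1}]=\infty$ and the corresponding $H$-orbit in $G/H$ is \emph{one} unbounded, $\mu$-coarsely connected subset. The gap is in the second half. To contradict narrowness you need $L(\mu)+1$ pairwise disjoint such subsets, i.e.\ $L(\mu)+1$ distinct double cosets each containing infinitely many left cosets, and your device for producing them does not work: left-translating the family $\{h_iuH\}_i$ by $g_j$ yields cosets $g_jh_iuH$ that lie in the double cosets $Hg_jh_iuH$, which need not coincide with each other or with $Hg_juH$ (that would require $h_i\in g_j^{-1}Hg_j\cdot uHu^{-1}$), so no single double coset is forced to absorb an $M$-clique, and re-running the Step~1 pigeonhole on the translated family only returns you to some double coset $Hu'H$ with $u'\in B_D$, unrelated to $Hg_juH$. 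There is also a structural problem with the target: narrowness alone already caps at $L(\mu)$ the number of double cosets containing infinitely many left cosets (their orbits in $G/H$ are disjoint, $\mu$-coarsely connected and unbounded), regardless of whether $|H\backslash G/H|$ is finite, so the intermediate claim you aim for must be manufactured from the hypothesis $|H\backslash G/H|=\infty$ in an essential way that your sketch does not supply. Note finally that your concluding step uses only the infinitude of the orbits and discards the pairwise $D$-closeness, which is the actual content of the failure of bounded packing; the case analysis in your ``Main obstacle'' paragraph does not repair either defect.

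For comparison, the paper argues in the positive direction through the commensurator. Since $H$ is finitely generated, each set $\rho_H(gH)\subseteq X_H$ is $\mu$-coarsely connected, distinct double cosets give disjoint such sets, and $\rho_H(gH)$ is bounded if and only if $g\in\Comm_G(H)$ (Connor--Mihalik); narrowness therefore forces all but at most $L(\mu)$ double cosets to lie in $\Comm_G(H)$. If there are infinitely many double cosets, this gives $[\Comm_G(H):H]=\infty$, hence $[G:\Comm_G(H)]<\infty$ by Proposition~\ref{index restrictions}, and the conclusion follows because commensurated subgroups have bounded packing (Connor--Mihalik) and bounded packing passes to finite-index overgroups (Hruska--Wise). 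Your outline contains no substitute for this last input, and I do not see how to complete your direct approach without effectively reproving it.
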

\begin{proof}
By \cite[Corollary 2.4]{Connor-Mihalik-2013}, an element $g \in G$ is in the commensurator $\Comm_G(H)$ if and only if the cosets $H$ and $gH$ are within finite Hausdorff distance from each other. Thus $g \in \Comm_G(H)$ if and only if $\rho_H(gH)$ is bounded.
Since $H$ is finitely generated, there exists $\mu \geq 1$ such that $\rho_H(gH) \subseteq X_H$ is $\mu$-coarsely connected for any $g \in G$. Since $X_H$ is narrow, there exists $N \in \mathbb{N}$ and $g_1, \dots, g_N \in G$ such that, for all $g \in G - \Comm_G(H)$, there exists $i \in \{1, \dots,N\}$ such that $HgH = Hg_iH$.
If the set $\{HgH:g \in G\}$ is infinite, then this implies that there is an infinite family of double cosets $\{Hk_i H\}_{i \in \mathbb{N}}$ such that $k_i \in \Comm_G(H)$ for each $i \in \mathbb{N}$. Thus $H$ has infinite index in $\Comm_G(H)$ so,  by Proposition~\ref{index restrictions}, $\Comm_G(H)$ has finite index in $G$. By \cite[Lemma~6.1]{Connor-Mihalik-2013}, $H$ has bounded packing in $\Comm_G(H)$ and by \cite[Proposition~2.5]{Hruska-Wise} this implies that $H$ has bounded packing in $G$.
\end{proof}

\begin{proposition} \label{3to1}
    Let $G$ be a group with finite generating set $S$ and $H \leq G$ be a finitely generated subgroup such that $X_H$ is narrow and has at least two ends.
    If $H$ has bounded packing then $H$ is a virtual fiber subgroup.
\end{proposition}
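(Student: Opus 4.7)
The plan is to invoke Proposition~\ref{no facing triples to action on Z}, which requires producing a finite-dimensional CAT(0) cube complex on which $G$ acts essentially, without facing triples, and with $H$ commensurable to a hyperplane stabiliser. Since $X_H$ has at least two ends, $H$ has at least two relative ends in $G$, so Sageev's construction produces an unbounded CAT(0) cube complex $C$ on which $G$ acts with a single orbit of hyperplanes, with $H$ of finite index in $\Stab_G(\h)$ for some hyperplane $\h$. By \cite{Hruska-Wise}, bounded packing of $H$ forces $C$ to be finite dimensional, and after passing to a Caprace--Sageev essential core I may assume the action is essential; Proposition~\ref{CS skewering} then supplies, for every hyperplane of $C$, an element of $G$ that skewers it.

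The main obstacle is to verify that $C$ has no facing triples, and this is precisely where the narrowness of $X_H$ must enter. Suppose towards a contradiction that $\{\h_0,\h_1,\h_2\}$ is a facing triple with pairwise disjoint halfspaces $\h_i^+$. Via Sageev's correspondence, each $\h_i^+$ is encoded by a subset $A_i \subseteq G$ that is almost invariant under a conjugate of $\Stab_G(\h)$, and pairwise disjointness of the halfspaces in $C$ translates into coarse pairwise disjointness of the $A_i$ in $X$. Each $A_i$ projects to an unbounded subset of $X_H$, and using cocompactness of $\Stab_G(\h)$ on $\h$ together with finite generation of $H$, these projections are $\mu$-coarsely connected for a uniform constant $\mu$ depending only on $S$ and a finite generating set of $H$. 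Choosing $\gamma \in G$ that skewers $\h_0$ and translating the facing triple by powers of $\gamma$ should then produce, for each $n$, $n$ pairwise disjoint halfspaces whose images in $X_H$ remain unbounded and $\mu$-coarsely connected. This contradicts the narrowness of $X_H$. The delicate point is keeping the coarse connectedness constant controlled under iteration, which will require tracking the geometry of translates of the defining almost-invariant set under the skewering element.

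Granted that $C$ has no facing triples, Proposition~\ref{no facing triples to action on Z} applied to $\h$ yields finite-index subgroups $K \leq \Stab_G(\h)$ and $\Lambda \leq G$ with $K \unlhd \Lambda$ and $\Lambda / K \cong \mathbb{Z}$. Set $H_0 := H \cap K$; since $H$ has finite index in $\Stab_G(\h)$, $H_0$ has finite index in both $H$ and $K$. As $H$ is finitely generated, so are $\Stab_G(\h)$ and $K$; hence $K$ has only finitely many subgroups of index $[K:H_0]$, and the conjugation action of $\Lambda$ on $K$ permutes this finite set, yielding a finite-index subgroup $\Lambda_0 \leq \Lambda$ in which $H_0$ is normal. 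Since $[\Lambda:K] = \infty$ prevents $\Lambda_0$ from being contained in $K$, the image of $\Lambda_0$ in $\Lambda / K \cong \mathbb{Z}$ is infinite cyclic, making $\Lambda_0 / H_0$ a finite-by-$\mathbb{Z}$ group and therefore virtually cyclic. Pulling back an infinite cyclic finite-index subgroup of $\Lambda_0 / H_0$ then produces a finite-index subgroup $G' \leq G$ with $H_0 \unlhd G'$ and $G' / H_0 \cong \mathbb{Z}$, establishing that $H$ is a virtual fiber subgroup.
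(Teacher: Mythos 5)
Your overall architecture matches the paper's: Sageev's construction plus bounded packing gives a finite dimensional cube complex with $H$ commensurable to $\Stab_G(\h)$, essentiality comes for free from Proposition~\ref{CS skewering} (no need to pass to an essential core, since there is a single orbit of hyperplanes and $C$ is unbounded), and once the absence of facing triples is known, Proposition~\ref{no facing triples to action on Z} finishes the job. Your closing commensurability argument (normalising $H_0 = H \cap K$ in a finite index subgroup of $\Lambda$ and pulling back an infinite cyclic subgroup of the finite-by-$\mathbb{Z}$ quotient) is correct, if slightly more roundabout than the paper's, which simply reduces to $H = \Stab_G(\h)$ at the outset using Proposition~\ref{narrow subgroups}.

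The genuine gap is in the step you flag as "the main obstacle": ruling out facing triples. Your plan is to take pairwise disjoint halfspaces $\h_0^+,\h_1^+,\h_2^+$, encode them as almost invariant sets $A_i \subseteq G$, and feed their images $\rho_H(A_i)$ into the narrowness of $X_H$. But $\rho_H$ is the quotient by the \emph{left} $H$-action, and the $A_i$ for the translated hyperplanes are invariant under conjugates $g_iHg_i^{-1}$, not under $H$; so $\rho_H(A_i) \cap \rho_H(A_j) = \emptyset$ would require $HA_i \cap HA_j = \emptyset$, which does not follow from $A_i \cap A_j = \emptyset$. Disjointness of halfspaces in $C$ (or of the $A_i$ in $X$) simply does not descend to disjointness in $X_H$, and narrowness of $X$ itself is not available (e.g.\ $H = F_2 \leq F_2 \times \mathbb{Z}$ has $X_H$ a line but $X$ far from narrow). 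The uniform coarse connectedness of $\rho_H(A_i)$ is also not justified: neither cocompactness of $G$ on $C$ nor of $\Stab_G(\h)$ on $\h$ is given. The paper sidesteps all of this by applying narrowness not to halfspaces but to the sets $\rho_H(k\la\gamma\ra)$, where $\gamma$ skewers $\h$: these are the orbits of the \emph{right} $\la\gamma\ra$-action on $H\bs G$, so distinct ones are automatically disjoint, they are $|\gamma|_S$-coarsely connected, and narrowness bounds the number of unbounded ones by some $N$. A separate cube-complex computation (Claim~\ref{bounded image implies intersection}) shows that $k\h \cap \h = \emptyset$ forces $\rho_H(k\la\gamma\ra)$ to be infinite, hence $d(\h,k\ell) \leq M$ for a uniform $M$ and all such $k$, where $\ell$ is an axis of $\gamma$; a facing triple would then produce hyperplanes $g^{-1}\psi^{-in}\h$ disjoint from $\h$ but with $d(\h, g^{-1}\psi^{-in}\ell) \to \infty$, a contradiction. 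You would need to replace your projection argument with something of this kind, since as written the contradiction with narrowness cannot be reached.
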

\begin{proof}
    Since $H$ has at least two relative ends and bounded packing, there is an action of $G$ on an unbounded finite dimensional CAT(0) cube complex $C$ with a single orbit of hyperplanes \break $G \cdot \h$ such that $H$ has finite index in the $G$-stabiliser of $\h$, denoted $\Stab_G(\h)$ (see Section~\ref{preliminaries}). By Proposition~\ref{narrow subgroups} and since being a virtual fiber subgroup is closed under commensurability, we can assume without loss of generality that $H = \Stab_G(\h)$. By Proposition~\ref{CS skewering}, the action of $G$ on $C$ is essential and there is a hyperbolic element $\gamma \in G$ which skewers $\mathfrak{h}$. Label the halfspaces of $\h$ and replace $\gamma$ with a proper power if necessary so that $\gamma \h^+ \subsetneq \h^+$.
    No non-zero power of $\gamma$ is contained in $H$ so $\rho_H(\la\gamma\ra)$ is infinite. If $\mu$ is the word length of $\gamma$ with respect to $S \cup S^{-1}$ then $\rho_H(g\la\gamma\ra)$ is $\mu$-coarsely connected for all $g \in G$. Therefore there exists $N \in \mathbb{N}$ and $k_1, \dots, k_N \in G$ such that the $\rho_H$-images of $\{k_i \la \gamma \ra\}$ are infinite and pairwise distinct and, for all $k \in G$, if $\rho_H(k \la \gamma \ra)$ is infinite then $\rho_H(k \la \gamma \ra) = \rho_H(k_i \la \gamma \ra)$ for some $i$.

\begin{claim} \label{bounded image implies intersection}
If $k \in G$ is such that $\rho_H(k \la \gamma \ra)$ is finite then $k \mathfrak{h} \cap \mathfrak{h} \neq \emptyset$.
\end{claim}
\begin{proof}
\renewcommand{\qedsymbol}{$\blacksquare$}
    If $\rho_H(k \la \gamma \ra)$ is finite then $Hk\gamma^n = Hk$ for some $n \in \mathbb{N}$, so $k\gamma^nk^{-1} \in H$. Therefore 
    \[d(\h, k\gamma^n\h) = d(\h, k\h) = d(\h, k\gamma^{-n}\h).\]
    If $k\h \cap \h = \emptyset$ then either $k\h \subseteq \h^+$ or $k\h \subseteq \h^-$. In the former case, either $k\gamma^n\h^+ \subsetneq k\h^+ \subsetneq \h^+$ or $k\gamma^{-n}\h^- \subsetneq k\h^- \subsetneq \h^+$, and in the latter, either $k\gamma^n\h^+ \subsetneq k\h^+ \subsetneq \h^-$ or $k\gamma^{-n} \h^- \subsetneq k\h^- \subsetneq \h^-$. In any case $k\h$ separates $\h$ from either $k\gamma^n \h$ or $k\gamma^{-n}\h$. This implies that either $d(\h, k\h) < d(\h, k\gamma^n\h)$ or $d(\h, k\h) < d(\h, k\gamma^{-n}\h)$, either of which is a contradiction.
\end{proof}

Let $k \in G$ be such that $k\h \cap \h = \emptyset$. Claim~\ref{bounded image implies intersection} implies that $\rho_H(k\la\gamma\ra)$ is infinite, so there is $i \in \{1, \dots, N\}$ such that $Hk\la\gamma\ra = Hk_i\la\gamma\ra$. In particular, there exists $n \in \mathbb{Z}$ and $h \in H$ such that $k = hk_i\gamma^n$. Thus if $\ell$ is an $\ell^2$-geodesic axis for $\gamma$ then $d(\h, k \ell) = d(\h, hk_i\gamma^n\ell) = d(\h, k_i\ell)$. Let $M \coloneqq \max \{d(\h, k_i\ell) : 1 \leq i \leq N\}$.

\begin{claim} \label{no facing triples}
    There is no facing triple of hyperplanes in $C$. 
\end{claim}
\begin{proof}
\renewcommand{\qedsymbol}{$\blacksquare$}
    Suppose that there exist $g, g' \in G$ such that $\{\h, g\h, g'\h\}$ is a facing triple. By \cite[Lemma~2.3]{Caprace-Sageev}, $\ell$ intersects $\h$ at a single point and the complementary components of $\ell$ lie in different halfspaces of $\h$. This implies that $\ell$ is not contained in either $g\h$ or $g'\h$. Therefore if $\ell$ intersects $g\h$ (resp. $g'\h$) then the complementary components of $\ell$ lie in different halfspaces of $g\h$ (resp. $g'\h$). Thus if $\ell$ intersects both $g\h$ and $g'\h$ then $\{\h,g\h,g'\h\}$ cannot be a facing triple. We can therefore assume, up to relabelling $g$ and $g'$ if necessary, that $\ell \cap g\h = \emptyset$. This implies that $\h$ and $\ell$ are both contained in a halfspace bounded by $g\h$, which we denote by $(g\h)^+$. Let $\psi = g\gamma g^{-1}$ and note that $\psi$ skewers $g\h$. Let $n \in \mathbb{Z}$ be such that $\psi^n(g\h)^- \subsetneq (g\h)^-$. Then for all $i \in \mathbb{N}$,
    \[d(\h, g^{-1}\psi^{-in}\ell) = d(\psi^{in}g\h, \ell) \geq i.\]
    Moreover $g^{-1}\psi^{-in}\h \cap \h = g^{-1}\psi^{-in}(\h \cap \psi^{in} g\h) = \emptyset$ since $g\h$ separates $\psi^{in} g\h$ from $\h$. But then $d(\h, g^{-1}\psi^{-in}\ell) \leq M$ for all $i$, which is a contradiction.
\end{proof}

The result then follows from Proposition~\ref{no facing triples to action on Z}.
\end{proof}

\section{Reducing a Euclidean cube complex to a line} \label{4}

The aim of this section is to prove Proposition~\ref{no facing triples to action on Z}.

\medskip

Let $Y$ be a finite dimensional CAT(0) cube complex. For any subspace $A \subseteq Y$ let $\mathcal{H}(A)$ denote the set of hyperplanes of $Y$ which separate points in $A$. Let $d_{Haus}$ denote the Hausdorff distance in $Y$ with respect to the $\ell^1$ metric $d$.

\begin{lemma} \label{finite Hausdorff distance}
    Suppose that $\Gamma$ and $Y$ are as in Proposition~\ref{no facing triples to action on Z}.
    Let $\h \subseteq Y$ be a hyperplane and $\gamma \in \Gamma$ be such that $\gamma \h^+ \subsetneq \h^+$. Then every hyperplane in the symmetric difference $\mathcal{H}(\h) \Delta \mathcal{H}(\gamma \h)$ is skewered by $\gamma$. It follows that $|\mathcal{H}(\h) \Delta \mathcal{H}(\gamma\h)| < \infty$ and $d_{Haus}(\h,\gamma\h) < \infty$. 
\end{lemma}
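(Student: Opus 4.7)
The plan is to prove the skewering claim first and derive the two finiteness claims from it. Given $\kk \in \mathcal{H}(\h) \Delta \mathcal{H}(\gamma\h)$, I may assume by symmetry that $\kk$ crosses $\h$ but is disjoint from $\gamma\h$; then $\kk$ and $\gamma\h$ are nested. A direct check of the four nested configurations, using that $\gamma\h^+ \subsetneq \h^+$, shows that all but two would force $\h$ into a single halfspace of $\kk$, contradicting $\kk \in \mathcal{H}(\h)$. After swapping the labels $\kk^\pm$ if necessary, I may assume $\gamma\h^+ \subsetneq \kk^+$. Now, supposing for contradiction that some $\gamma^m$ with $m \geq 1$ stabilises $\kk$ (the alternative in Lemma~\ref{Euclidian isometries}), I pass to $2m$ if needed so that $\gamma^m$ preserves the halfspaces of $\kk$. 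Applying $\gamma^{-m}$ to $\gamma\h^+ \subsetneq \kk^+$ gives $\gamma^{1-m}\h^+ \subsetneq \kk^+$; since $1-m \leq 0$, iterating $\gamma\h^+ \subsetneq \h^+$ backward yields $\h^+ \subseteq \gamma^{1-m}\h^+$. Combining these gives $\h^+ \subsetneq \kk^+$, contradicting $\kk \in \mathcal{H}(\h)$, so $\gamma$ skewers $\kk$.

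\medskip

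For the finiteness of the symmetric difference, I would fix a geodesic axis $\ell$ of $\gamma$ and set $p = \ell \cap \h$, letting $\tau_\gamma$ denote the translation length. Every skewered hyperplane meets $\ell$ in a single point, and the hyperplanes crossing $\ell$ form only finitely many $\gamma$-orbits (one per crossing in a fundamental segment of length $\tau_\gamma$). I would then show that for any hyperplane $\kk$, the set $J_\kk = \{j \in \mathbb{Z} : \kk \in \mathcal{H}(\gamma^j\h)\}$ is an integer interval: given $a < b < c$ in $J_\kk$, the nested chain $\gamma^c\h^+ \subsetneq \gamma^b\h^+ \subsetneq \gamma^a\h^+$ ensures that each of the four intersections $\kk^\pm \cap \gamma^b\h^\pm$ inherits non-emptiness from $\kk^\pm \cap \gamma^a\h^\pm$ or $\kk^\pm \cap \gamma^c\h^\pm$, so $\kk$ crosses $\gamma^b\h$. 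Since $J_{\gamma^n\kk} = J_\kk + n$, a direct enumeration shows that $\gamma^n\kk$ lies in $\mathcal{H}(\h) \Delta \mathcal{H}(\gamma\h)$ for at most two values of $n$, namely those for which $J_\kk + n$ meets $\{0,1\}$ in exactly one element. Each orbit thus contributes at most two hyperplanes, and combined with the finite orbit count this gives $|\mathcal{H}(\h) \Delta \mathcal{H}(\gamma\h)| < \infty$.

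\medskip

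For the Hausdorff distance, I would fix $x \in \h$ and enumerate the hyperplanes separating $x$ from $\gamma\h$. Any such hyperplane is disjoint from $\gamma\h$ with $x$ on the opposite side; if it crosses $\h$ it lies in the (now finite) symmetric difference, and otherwise it strictly separates $\h$ from $\gamma\h$ and must cross the axis segment $[p, \gamma p]$, of which at most $\tau_\gamma$ exist. This yields the uniform bound $d(x, \gamma\h) \leq |\mathcal{H}(\h) \Delta \mathcal{H}(\gamma\h)| + \tau_\gamma$, and a symmetric argument bounds $d(y, \h)$ for $y \in \gamma\h$, so $d_{Haus}(\h, \gamma\h) < \infty$. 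The main obstacle will be verifying that $J_\kk$ is an interval and carrying out the orbit-contribution bookkeeping; the halfspace manipulations at the start and the separating-hyperplane count at the end are then relatively routine.
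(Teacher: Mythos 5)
Your proof is correct, and it diverges from the paper's in two of its three steps. For the skewering claim, the paper argues via the facing-triple hypothesis directly: if $\kk$ meets $\h$ but not $\gamma\h$ and does not separate $\gamma\h$ from any $\gamma^{-n}\h$, the absence of facing triples forces $\kk$ to cross every $\gamma^{-n}\h$, whence no non-zero power of $\gamma$ can stabilise $\kk$ and Lemma~\ref{Euclidian isometries} applies. You instead take the other horn of Lemma~\ref{Euclidian isometries} head-on and refute it by pure halfspace nesting ($\gamma^{1-m}\h^+ \subsetneq \kk^+$ together with $\h^+ \subseteq \gamma^{1-m}\h^+$ forces $\h^+ \subseteq \kk^+$); this is cleaner and uses the no-facing-triples hypothesis only through Lemma~\ref{Euclidian isometries} itself. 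For the finiteness of the symmetric difference both arguments count $\la\gamma\ra$-orbits of hyperplanes crossing a fundamental segment of an axis; your observation that $J_\kk$ is an interval (the standard ``crossing is convex along a descending chain of halfspaces'' argument, which checks out) gives at most two contributions per orbit, versus the paper's claimed one, and either bound suffices. For the Hausdorff distance the paper simply cites \cite[Lemma~2.24]{CFI}, whereas you reprove that estimate by splitting the hyperplanes separating a point of $\h$ from $\gamma\h$ into those in the (finite) symmetric difference and those separating $\h$ from $\gamma\h$, the latter being bounded by the number of hyperplanes crossing $[p,\gamma p]$; this is a correct, self-contained substitute. The only points worth tightening in a write-up are the routine verifications you already flag: that two disjoint hyperplanes with $\h^+\subseteq\kk^+$ cannot cross, and that a finite-length axis segment meets only finitely many hyperplanes (which is where properness/finite dimensionality enters).
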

\begin{proof}
Let $\kk$ be a hyperplane which intersects $\h$ but not $\gamma \h$. If $\kk$ separates $\gamma \h$ from $\gamma^{-n}\h$ for some $n \in \mathbb{N}$ then $\gamma$ skewers $\kk$. So suppose that $\kk$ does not separate $\gamma\h$ from any $\gamma^{-n}\h$. Since there are no facing triples, $\kk$ must intersect $\gamma^{-n}\h$ for all $n \in \mathbb{N}$. Therefore for all $n \geq 1$, $\gamma^n \kk$ intersects both $\h$ and $\gamma\h$ while $\gamma^{-n}\kk$ intersects neither $\h$ nor $\gamma \h$. Similarly, for any $\kk \in \mathcal{H}(\h) \Delta \mathcal{H}(\gamma\h)$, either $\gamma$ skewers $\kk$ or for all $n \neq 0$ we have $\gamma^n \kk \notin \mathcal{H}(\h) \Delta \mathcal{H}(\gamma\h)$.
This implies in particular that any $\kk \in \mathcal{H}(\h) \Delta \mathcal{H}(\gamma\h)$ is not stabilised by any power of $\gamma$ and, by Lemma~\ref{Euclidian isometries}, must therefore be skewered by $\gamma$.

Let $\ell$ be a geodesic axis of $\gamma$ and $\sigma \subseteq \ell$ be a fundamental domain for the action of $\la\gamma\ra$ on $\ell$. Every hyperplane which is skewered by $\gamma$ intersects $\ell$ and is therefore of the form $\gamma^n \kk$, where $\kk$ intersects $\sigma$. At most one hyperplane in each $\la\gamma\ra$-orbit is in $\mathcal{H}(\h) \Delta \mathcal{H}(\gamma\h)$ so this implies that $|\mathcal{H}(\h) \Delta \mathcal{H}(\gamma\h)| \leq |\sigma| < \infty$, where $|\sigma|$ is the length of $\sigma$.

It follows from \cite[Lemma~2.24]{CFI} that $d_{Haus}(\h, \gamma\h) \leq |\mathcal{H}(\h) \Delta \mathcal{H}(\gamma \h)| + d(\h,\gamma\h)$ therefore $d_{Haus}(\h, \gamma\h) < \infty$.
\end{proof}

\begin{lemma} \label{universal skewer}
    Suppose that $\Gamma$ and $Y$ are as in Proposition~\ref{no facing triples to action on Z}.
    Then there is a hyperbolic element $\gamma \in \Gamma$ which skewers every hyperplane of $Y$. 
\end{lemma}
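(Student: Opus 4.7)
The plan is to build a single hyperbolic element $\gamma \in \Gamma$ whose combinatorial axis crosses every hyperplane of $Y$. By Lemma~\ref{Euclidian isometries}, this is equivalent to requiring that no positive power of $\gamma$ stabilises any hyperplane: so the goal becomes to find a hyperbolic $\gamma$ with $\gamma^n \kk \neq \kk$ for every hyperplane $\kk$ and every $n \geq 1$.

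First, I would use essentiality together with Caprace--Sageev's Double Skewering Lemma (the general halfspace-flipping version of Proposition~\ref{CS skewering}, which does not require a single orbit of hyperplanes) to produce an initial hyperbolic element $\gamma_0 \in \Gamma$ skewering some hyperplane $\h_0$. Let $\ell_0$ be a combinatorial axis of $\gamma_0$; the hyperplanes crossing $\ell_0$ are precisely those skewered by $\gamma_0$, and by Lemma~\ref{finite Hausdorff distance} they sit as a $\gamma_0$-invariant chain at uniformly bounded $\ell^1$-Hausdorff distance from one another.

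Next, suppose $\kk$ is any hyperplane not skewered by $\gamma_0$. By Lemma~\ref{Euclidian isometries}, some power $\gamma_0^n$ stabilises $\kk$, while essentiality (again via the Double Skewering Lemma) produces a hyperbolic $\delta \in \Gamma$ that skewers $\kk$. I would then form $\gamma_1 \coloneqq \gamma_0^N \delta^M$ for sufficiently large integers $N, M$, and verify three points: $\gamma_1$ is hyperbolic; $\gamma_1$ skewers $\kk$ (because the $\delta^M$ factor strictly deepens a halfspace of $\kk$ while the $\gamma_0^N$ factor only permutes hyperplanes already stabilised by powers of $\gamma_0$); and $\gamma_1$ still skewers every hyperplane that $\gamma_0$ skewered. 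This last point is where no facing triples enters essentially: via Lemma~\ref{finite Hausdorff distance}, the hyperplanes along $\ell_0$ nest as a linear chain with uniformly bounded spacing, so inserting the $\delta^M$ factor perturbs the axis only inside a bounded region and, for $N$ large relative to the translation length of $\delta^M$, does not disrupt the skewering of the chain.

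The main obstacle I expect is proving that this enlargement procedure terminates in finitely many steps, yielding a universal skewer rather than an infinite sequence of improvements. To handle this, I would exploit the finite dimension of $Y$ together with no facing triples to show that the hyperplanes failing to be skewered by the current $\gamma$ lie in a bounded number of parallelism classes. More precisely, suppose for contradiction that an infinite family of pairwise distinct hyperplanes $\{\kk_i\}$ is each stabilised by some power of $\gamma$. Any two of them are either crossing or disjoint; a large pairwise-crossing subfamily would violate $\dim(Y) < \infty$, while a large pairwise-disjoint subfamily would, by no facing triples, contain an infinite linearly ordered chain whose nested halfspaces, combined with essentiality and Lemma~\ref{finite Hausdorff distance}, would force some $\gamma^k$ to actually skewer one of the $\kk_i$, contradicting the standing assumption. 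Hence the iteration must terminate, producing a hyperbolic $\gamma \in \Gamma$ skewering every hyperplane of $Y$.
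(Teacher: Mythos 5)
There is a genuine gap in your termination argument. You propose to show that the iteration stops because only finitely many hyperplanes can fail to be skewered by the current element, arguing that an infinite pairwise-disjoint family of non-skewered hyperplanes would force some power of $\gamma$ to skewer one of them. This is false: take $Y$ to be the standard cubulation of $\mathbb{R}^2$ with $\Gamma = \mathbb{Z}^2$ acting by translations (an essential action on a finite dimensional complex with no facing triples) and $\gamma = (1,0)$. Then $\gamma$ stabilises every horizontal hyperplane, so the non-skewered hyperplanes form an infinite nested chain and no power of $\gamma$ skewers any of them. Thus the complement of the skewered set can be infinite and your counting argument gives no bound on the number of improvement steps. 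Your intermediate claim that $\gamma_1 = \gamma_0^N\delta^M$ ``still skewers every hyperplane that $\gamma_0$ skewered'' is also not justified: to preserve the skewering of a hyperplane $\h$ for which $\delta^M\h^-\subsetneq\h^-$ you need $N$ large compared with $d_{Haus}(\h,\delta^M\h)$, and there is no uniform such bound over the infinitely many hyperplanes skewered by $\gamma_0$ (the bound from Lemma~\ref{finite Hausdorff distance} is not $\gamma_0$-equivariant, since $\delta$ need not commute with $\gamma_0$).

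The paper's proof sidesteps both problems by maintaining not the full set of skewered hyperplanes but a finite \emph{pairwise-intersecting} family $\mathcal{H}$ together with an element $g$ skewering each of its members. The key observation you are missing is that any hyperplane $\kk$ not skewered by $g$ must cross every $g$-translate of every $\h \in \mathcal{H}$: it cannot separate two translates without being skewered, the absence of facing triples then forces it to cross some translate, and Lemma~\ref{finite Hausdorff distance} upgrades ``some translate'' to ``all translates''. Hence $\mathcal{H}\cup\{\kk\}$ is again pairwise intersecting, so its cardinality is at most $\dim(Y)$ and the process terminates after at most $\dim(Y)-1$ steps. Moreover only the finitely many hyperplanes of $\mathcal{H}$ need to have their skewering preserved when passing to the new element $g' = g^{n_1n_2}k^m$, which is exactly what makes a uniform choice of the exponent $n_2$ possible. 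Your replacement element $\gamma_0^N\delta^M$ is in the same spirit as the paper's $g^{n_1n_2}k^m$, but without the pairwise-intersecting bookkeeping the induction does not close.
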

\begin{proof}
Let $\mathcal{H}$ be a (necessarily finite) set of pairwise intersecting hyperplanes of $Y$ and $g \in \Gamma$ be an isometry which skewers each $\h \in \mathcal{H}$. Label the halfspaces and replace $g$ with a proper power if necessary so that $g \h^+ \subsetneq \h^+$ for each $\h \in \mathcal{H}$.
Suppose that there exists a hyperplane $\kk$ of $Y$ which is not skewered by $g$. By \cite[Proposition~3.2]{Caprace-Sageev}, every hyperplane of $Y$ is skewered by an element of $\Gamma$ so there exists $k \in \Gamma$ be such that $k \kk^+ \subsetneq \kk^+$. By Lemma~\ref{Euclidian isometries}, there exists $n_1 \in \mathbb{N}$ such that $g^{n_1}\kk^+ = \kk^+$. 
Moreover, there exists $m \in \mathbb{N}$ such that, for each $\h \in \mathcal{H}$, either $k^m \h^+ = \h^+$, $k^m \h^+ \subsetneq \h^+$ or $k^m \h^- \subsetneq \h^-$. In the first two cases, $g^{n_1n}k^m \h^+ \subseteq g^{n_1n}\h^+ \subsetneq \h^+$ for all $n \in \mathbb{N}$. In the latter case, note that $d_{Haus}(\h, k^m\h) < \infty$ by Lemma~\ref{finite Hausdorff distance} and, for any $n \in \mathbb{N}$ such that $d(g^{nn_1}\h, \h) > d_{Haus}(\h, k^m\h)$, we have that $g^{n_1n}k^m\h^+ \subsetneq \h^+$. Since there are finitely many hyperplanes in $\mathcal{H}$, we can choose $n_2 \in \mathbb{N}$ sufficiently large so that $g^{n_1n_2} k^m \h^+ \subsetneq \h^+$ for all $\h \in \mathcal{H}$. 
We also have that $g^{n_1n_2} k^m \kk^+ \subsetneq g^{n_1n_2} \kk^+ = \kk^+$ so the element $g' \coloneqq g^{n_1n_2} k^m$ skewers every hyperplane in $\mathcal{H}' \coloneqq \mathcal{H} \cup \{\kk\}$. Moreover, for each $\h \in \mathcal{H}$, $\kk$ cannot separate hyperplanes in $\la g \ra \cdot \h$ without being skewered by $g$. By the lack of facing triples, this implies that $\kk$ must intersect $g^n\h$ for some $n$. It then follows from Lemma~\ref{finite Hausdorff distance} that $\kk$ intersects $g^n\h$ for all $n$. In particular, $\kk$ intersects every hyperplane in $\mathcal{H}$, so $\mathcal{H}'$ is pairwise intersecting and $|\mathcal{H}'| \leq \dim(Y)$. If there exists a hyperplane $\kk'$ which is not skewered by $g'$, then the above argument applied to $\mathcal{H}', g'$ and $\kk'$ implies that $\mathcal{H}'' \coloneqq \mathcal{H}' \cup \{\kk'\}$ is pairwise intersecting and there exists $g'' \in \Gamma$ which skewers every hyperplane in $\mathcal{H}''$. We can only iterate this process at most $\dim(Y)-1$ times until we obtain the desired element $\gamma$.
\end{proof}

\begin{proof}[Proof of Proposition \ref{no facing triples to action on Z}]
 If $K$ is a hyperplane stabiliser in the cubical subdivision of $Y$ then there is a hyperplane $\h$ in $H$ such that $[\Stab_\Gamma(\h): K] \leq 2$. Therefore, by replacing $Y$ with its cubical subdivision if necessary, we can assume that $\Gamma$ acts on $Y$ without hyperplane inversions.

By Lemma~\ref{universal skewer} there is a hyperbolic isometry $\gamma \in \Gamma$ which skewers every hyperplane of $Y$. There are finitely many $\la\gamma\ra$-orbits of hyperplanes in $Y$ so we can replace $\gamma$ with a proper power so that $\gamma \h^+ \subsetneq \h^+$ for each hyperplane $\h \subseteq Y$.
Fix a hyperplane $\h \subseteq Y$, let $K \coloneqq \Stab_\Gamma(\h) = \Stab_\Gamma(\h^+)$ and let $\Gamma' \coloneqq \la K, \gamma \ra \leq \Gamma$. 

Let $M^+$ (respectively $M^-$) be the set of hyperplanes of $Y$ which lie in $\h^+$ (respectively $\h^-$). Then $kM^+ = M^+$ for any $k \in K$. Moreover, $\gamma M^+ \subseteq M^+$ and, if $\kk \in M^+ - \gamma M^+$, then $\kk$ either separates $\gamma \h$ from $\h$ or $\kk$ intersects $\gamma\h$ but not $\h$, so $|M^+ - \gamma M^+| < \infty$ by Lemma~\ref{finite Hausdorff distance} and thus $|M^+ \Delta \gamma M^+| < \infty$. Similarly, $|M^+ \Delta \gamma^{-1}M^+| = |\gamma^{-1}M^+ - M^+| < \infty$. If $N$ is a set of hyperplanes such that $|M^+ \Delta N| < \infty$ then $|M^+ \Delta \gamma^{\pm 1} N| \leq |M^+ \Delta \gamma^{\pm 1} M^+| + |\gamma^{\pm 1}(M^+ \Delta N)| < \infty$ and, if $k \in K$, then $|M^+ \Delta kN| \leq |M^+ \Delta kM^+| + |k(M^+\Delta N)| < \infty$. It follows that $|M^+ \Delta gM^+| < \infty$ for all $g \in \Gamma'$, so $M^+$ is \textit{commensurated by the action of }$\Gamma'$. A symmetric argument shows that $M^-$ is also commensurated by the action of $\Gamma'$. This implies that the map $tr_{M^+}: \Gamma' \rightarrow \mathbb{Z}$ given by 
\[ tr_{M^+}(g) = |M^+ - g^{-1}M^+| - |g^{-1}M^+ - M^+|\]
is well defined and, by \cite[Proposition~4.H.1]{Cornulier}, it is a homomorphism. Since $tr_{M^+}(\gamma) < 0$, this homomorphism is non-trivial.

Clearly $K \leq \ker(tr_{M^+})$. Let us show that $K$ has finite index in $\ker(tr_{M^+})$. Given a hyperplane $\kk$ of $Y$, define $L_{\kk} \in \mathbb{Z}$ as follows. If $\kk$ does not separate $\gamma^{n_1}\h$ from $\gamma^{n_2}\h$  for any $n_1,n_2 \in \mathbb{Z}$ then $L_{\kk} \coloneqq 0$. Otherwise
\[L_{\kk} \coloneqq \min\{|n_1 - n_2| : n_1,n_2 \in \mathbb{Z} \text{ and } \kk \text{ separates } \gamma^{n_1}\h \text{ from } \gamma^{n_2}\h\}.\]
It is clear from the definition that $L_{\gamma^n \kk} = L_{\kk}$ for all $n \in \mathbb{Z}$. Therefore, since there are finitely many $\la\gamma\ra$-orbits of hyperplanes in $Y$, there exists $L \geq 0$ such that $L_{\kk} \leq L$ for all $\kk$.
Let $g \in \ker(tr_{M^+})$.
Then $g\h$ intersects $\h$ since otherwise either $|{M^+} - g^{-1}{M^+}| > 0 = |g^{-1}{M^+} - {M^+}|$ or $|g^{-1}{M^+} - {M^+}| = 0 < |{M^+} - g^{-1}{M^+}|$. Moreover, any hyperplane which intersects $g\h$ but not $\h$ is contained in $(M^+ - gM^+) \cup (M^- - gM^-)$. Both $M^+$ and $M^-$ are commensurated by $g$, so these sets are finite which implies that there exist $n_1 < 0 < n_2$ such that $g\h$ does not intersect $\gamma^{n_1}\h$ or $\gamma^{n_2}\h$. Since $\{\gamma^{n_1}\h, g\h, \gamma^{n_2}\h\}$ is not a facing triple, this implies that $g\h$ separates $\gamma^{n_1}\h$ from $\gamma^{n_2}\h$. Hence $0 < L_{g\h} \leq L$ so the $\ker (tr_{M^+})$-orbit of $\h$ is contained in the finite set of hyperplanes which separate $\gamma^{1-L}\h$ from $\gamma^{L-1}\h$. Thus $K$ has finite index in $\ker(tr_{M^+})$. It follows that $K$ is normalised by a finite index subgroup $\Lambda$ of $\Gamma'$ and $\Lambda / K = \mathbb{Z}$.

It remains to show that $\Gamma'$ has finite index in $\Gamma$.
Let $R \coloneqq d_{Haus}(\h, \gamma\h) < \infty$, let $\ell$ be a combinatorial geodesic axis for $\gamma$ and let $\sigma \subseteq \ell$ be a segment of length $R$. Let $1 \leq R' \leq R$ be such that exactly $R'$ of the edges of $\sigma$ are dual to hyperplanes in the $\Gamma$-orbit of $\h$ and let $g_1, \dots, g_{R'} \in \Gamma$ be such that each hyperplane in $\Gamma \cdot \h$ which intersects $\sigma$ is of the form $g_i\h$ for some $i$. Let $g \in \Gamma$. Since $\gamma$ skewers every hyperplane in $Y$, the combinatorial axis $\ell$ intersects $g\gamma^n \h$ for all $n \in \mathbb{Z}$. Therefore the complement $\ell - \cup\{\ell \cap g \gamma^n \h : n \in \mathbb{Z}\}$ is a disjoint union of line segments of length at most $R$ which implies that, for some $n \in \mathbb{Z}$, the hyperplane $g \gamma^n \h$ intersects $\sigma$. Therefore there exists $i \in \{1, \dots, R'\}$ such that $g\gamma^n\h = g_i\h$ which implies that $g_i^{-1}g\gamma^n \in K$ and $g_i^{-1}g \in K \gamma^{-n} \subseteq \Gamma'$. Hence $\Gamma'$ has index at most $R'$ in $\Gamma$.
\end{proof}

\begin{remark}
    If we assume, as in the proof of Proposition~\ref{3to1}, that $\Gamma$ and $H \coloneqq \Stab_\Gamma(\h)$ are finitely generated and $H \bs \Gamma$ is narrow, then there is a much shorter proof:
    
    For each $n \in \mathbb{N}$ let $\mathcal{H}_n$ denote the set of hyperplanes whose distance from $\h$ is exactly $n$. Then the hyperplanes of $\mathcal{H}_n$ contained in $\h^-$ are pairwise intersecting, as are those contained in $\h^+$. Therefore $|\mathcal{H}_n| \leq 2\dim(Y)$ so the stabiliser of any hyperplane in $\mathcal{H}_n$ with respect to the action of $H$ on $\mathcal{H}_n$ has index at most $(2\dim(Y))!$ in $H$. Since $H$ is finitely generated, there are finitely many such subgroups. By \cite[Proposittion~3.2]{Caprace-Sageev}, there exists $g \in \Gamma$ such that $g \h^+ \subsetneq \h^+$. The subgroup $K_1 \coloneqq \cap_{n \in \mathbb{Z}} \Stab_H(g^n \h) \leq H$ is a finite intersection of subgroups of index at most $(2\dim(Y))!$ so it is a finite index subgroup of $H$. Let $K \leq K_1$ be a finite index normal subgroup of $H$. Then for each $n \in \mathbb{Z}$, the index of $K^{g^n}$ in $K_1^{g^n} = K_1$ is equal to the index of $K$ in $K_1$ so the orbit $\{K^{g^n} : n \in \mathbb{Z}\}$ is finite. Therefore there exists $m \in \mathbb{N}$ such that $\la g^m \ra \leq N(K)$. For all $n \in \mathbb{N}$ and $k \in K$, $d(\h, kg^n \h) = d(\h,g^n\h) > d(\h,g^{n-1}\h)$ so each element of $\la g^m \ra$ is in a different coset of $K$ and $K$ has infinite index in its normaliser. Since $H \leq N(K)$ and $H \bs \Gamma$ is narrow, Proposition~\ref{index restrictions} then implies that $[\Gamma:N(K)] < \infty$.
    
    The quotient group $N(K)/K$, equipped with any proper word metric, is quasi-isometric to $K \bs \Gamma$, which is a finite index cover of $H \bs \Gamma$. It is clear from the definition of $K$ that $K \leq \Stab_\Gamma(\h^+)$ so $K \bs \Gamma$ is also a finite index cover of $\Stab_\Gamma(\h^+) \bs \Gamma$. By \cite[\S 3.3]{Sageev95}, $\Stab_\Gamma(\h^+) \bs \Gamma$ has at least two ends and, since it is narrow, $H \bs \Gamma$ has finitely many ends. It follows that $N(K) / K$ has $2 \leq e < \infty$ ends but, since $N(K) / K$ is a group, $e$ must equal $2$ and $N(K)/K$ must be virtually cyclic \cite{Hopf}.
\end{remark}
\smallskip
\begin{remark} \label{comparisons}
    While there are statements in the literature of a similar nature to Proposition~\ref{no facing triples to action on Z}, these do not, to the author's knowledge, directly imply this result. 

     If, in addition to the conditions of the proposition, one assumes for example that $\Gamma$ acts on $Y$ without a fixed point in the visual boundary then \cite[Theorem~E]{Caprace-Sageev} implies that $\Gamma$ fixes a Euclidean flat in $Y$. The task of finding an appropriate map to $\mathbb{Z}$ then requires some work (and probably some CAT(0) geometry). It is moreover possible to reduce to the case where either there is no fixed point in the visual boundary or there is a fixed point in the Roller boundary \cite[Proposition~2.26]{CFI}. In the later case, Caprace's appendix in \cite{CFI} provides a number of epimorphisms from a finite index subgroup of $\Gamma$ to $\mathbb{Z}$ (in fact, in either case one can show that $\Gamma$ virtually fixes the simplicial boundary of $Y$ pointwise which suffices to apply Caprace's argument). The fact that one of these has a kernel with the required properties is not immediate, but it can be shown following much the same argument as the proof of Proposition~\ref{no facing triples to action on Z}.
\end{remark}

    \section{A family of examples} \label{examples}
 
 A family of groups introduced by Houghton \cite{Houghton78} provides an example, for each $n \geq 2$, of a finitely generated pair of groups $H_n \leq G_n$ such that $H_n \bs G_n$ is an $n$-ended quasi-tree with linear growth -- and is therefore narrow by Proposition~\ref{linear growth implies narrow} -- but $H_n$ is not a virtual fiber subgroup. These groups are known to provide examples of pairs of groups with an arbitrary number of relative ends \cite[Example~2.1]{Scott77} and, as will become apparent, the fact that $H_n \bs G_n$ is not only $n$-ended but a quasi-tree with linear growth is obvious from the construction. 

\begin{definition}[Houghton's groups]
    Let $n \in \mathbb{N}$ and $X_n$ be a disjoint union of $n$  copies of $\mathbb{N}$. We will use the notation $X_n = \{1, \dots, n\} \times \mathbb{N}$. A bijection $f:X_n \rightarrow X_n$ is a \textit{translation at infinity} of $X_n$ if there exists a finite subset $K \subseteq X_n$ and constants $a_1, \dots, a_n \in \mathbb{Z}$ such that $f(i,m) = (i,m + a_i)$ for all $(i,m) \in X_n - K$.
    Houghton's group $\mathcal{G}_n$ is the group of all translations at infinity of $X_n$.
\end{definition}

When $n = 1$, $\mathcal{G}_n$ is the group of finitely supported permutations on $\mathbb{N}$, which is not finitely generated. However $\mathcal{G}_n$ is finitely generated for all $n \geq 2$. For $n \geq 3$, Houghton defined these groups via the generating sets $S_n$ defined below. The equivalence of the two definitions essentially follows from the observation of Wiegold \cite{Wiegold} that the commutator subgroup $[\la S_n\ra, \la S_n\ra]$ is the group of finitely supported permutations of $X_n$ (see also \cite[Lemma~2.7]{Lee}) and is implicit in \cite{Brown}. The generating sets are defined as follows.

Let $X$ be the increasing union $\cup_{n \in \mathbb{N}} X_n = \mathbb{N} \times \mathbb{N}$ and note that each $\mathcal{G}_n$ is a subgroup of the permutation group of $X$. For each $i \in \mathbb{N}$, let $g_i: X \rightarrow X$ be defined by 
\[g_i(j,m) =
\begin{cases}
    (j,m-1) \quad &\text{if } j=1, m >1 \\
    (i+1,1) \quad &\text{if } j=1, m=1 \\
    (j,m+1) \quad &\text{if } j=i+1 \\
    (j,m) \quad &\text{otherwise.}
\end{cases}
\]
In other words, $g_i$ is a translation of length 1 along the ``line" $\{1\} \times \mathbb{N} \cup \{i+1\} \times \mathbb{N}$. Whenever $i < n$, $g_i(X_n) = X_n$ and $g_i \in \mathcal{G}_n$.
When $n \geq 3$, $\mathcal{G}_n$ is generated by $S_n \coloneqq \{g_1, \dots, g_{n-1}\}$. When $n=2$, we need to add a transposition. Let $\beta: X \rightarrow X$ be defined by 
 \[\beta(i,m) = 
    \begin{cases}
        (2,1) \quad &\text{if } i=1, m=1 \\
        (1,1)\quad &\text{if } i=2, m=1 \\
        (i,m) \quad &\text{otherwise}.
    \end{cases} \]
Then $\mathcal{G}_2$ is generated by $S_2 \coloneqq \{g_1, \beta\}$ (see \cite[pages 5-6]{Lee}).

\medskip
 
 For each $n \geq 2$, consider the natural right action of $\mathcal{G}_n$ on $X_n$. The Schreier graph $Y_n$ of this action with respect to $S_n$, depicted in Figure~\ref{H2} for $n=2$ and Figure~\ref{H3} for $n=3$, is a quasi-tree with $n$ ends. The Schreier graph of a right action is also the Schreier coset graph of any point stabiliser (up to choice of basepoint). Therefore, if $H_n$ is the $\mathcal{G}_n$-stabiliser of $(1,1) \in X_n$, then $H_n \bs \mathcal{G}_n$ is a quasi-tree with $n$ ends. Moreover, $H_n$ is the group of translations at infinity of $X_n$ which fix $(1,1)$, which is precisely the group of translations at infinity of the disjoint union of $n$ rays: $\{1\} \times \mathbb{N}_{\geq2} \sqcup \{2,\dots,n\}\times\mathbb{N}$. Therefore $H_n$ is abstractly isomorphic to $\mathcal{G}_n$ and is in particular finitely generated. Moreover $\mathcal{G}_n = H_n \cup H_ng_1 H_n$.
 \medskip

\begin{figure}
    \centering
    \begin{tikzpicture}
        \draw[color=Bittersweet] (1.2,0) -- (12.8,0);
        \draw[color=Bittersweet, dashed] (0.5,0) -- (1.2,0);
        \draw[color=Bittersweet, dashed] (12.8,0) -- (13.5,0);
        \draw[color=CadetBlue] (2,0.5) circle (0.5);
        \draw[color=CadetBlue] (4,0.5) circle (0.5);
        \draw[color=CadetBlue] (10,0.5) circle (0.5);
        \draw[color=CadetBlue] (12,0.5) circle (0.5);
        \draw[color=CadetBlue] (7,0) ellipse (1 and 0.7);
        \draw[color=Bittersweet, -Stealth] (2.99,0) -- (3,0);
        \draw[color=Bittersweet, -Stealth] (4.99,0) -- (5,0);
        \draw[color=Bittersweet, -Stealth] (6.99,0) -- (7,0);
        \draw[color=Bittersweet, -Stealth] (8.99,0) -- (9,0);
        \draw[color=Bittersweet, -Stealth] (10.99,0) -- (11,0);
        
        \draw[color=CadetBlue, -{Triangle[open]}] (2.04,1) -- (2.05,1);
        \draw[color=CadetBlue, -{Triangle[open]}] (4.04,1) -- (4.05,1);
        \draw[color=CadetBlue, -{Triangle[open]}] (10.04,1) -- (10.05,1);
        \draw[color=CadetBlue, -{Triangle[open]}] (12.04,1) -- (12.05,1);
        \draw[color=CadetBlue, -{Triangle[open]}] (7.04,0.7) -- (7.05,0.7);
        \draw[color=CadetBlue, -{Triangle[open]}] (6.96,-0.7) -- (6.95,-0.7);
        \filldraw[black] (2,0) circle (1.5pt) node[anchor=north]{\small$(1,3)$};
        \filldraw[black] (4,0) circle (1.5pt) node[anchor=north]{\small$(1,2)$};
        \filldraw[black] (6,0) circle (1.5pt);
        \node[anchor=north] at (5.65,0) {\small$(1,1)$}; 
        \filldraw[black] (8,0) circle (1.5pt);
        \filldraw[black] (10,0) circle (1.5pt) node[anchor=north]{\small$(2,2)$};
        \filldraw[black] (12,0) circle (1.5pt) node[anchor=north] at (12.2,0) {\small$(2,3)$};
        \node[anchor=north] at (8.35,0) {\small$(2,1)$};
        \node[color=Bittersweet, anchor=south] at (5,0.05) {$g_1$};
        \node[color=CadetBlue, anchor=south] at (7,0.75) {$\beta$};
    \end{tikzpicture}
    \caption{Schreier graph of the action of $\mathcal{G}_2$ on $X_2$.}
    \label{H2}
\end{figure}
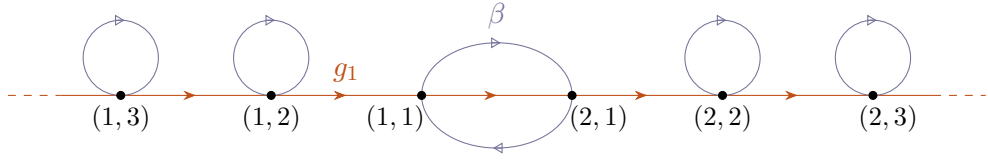

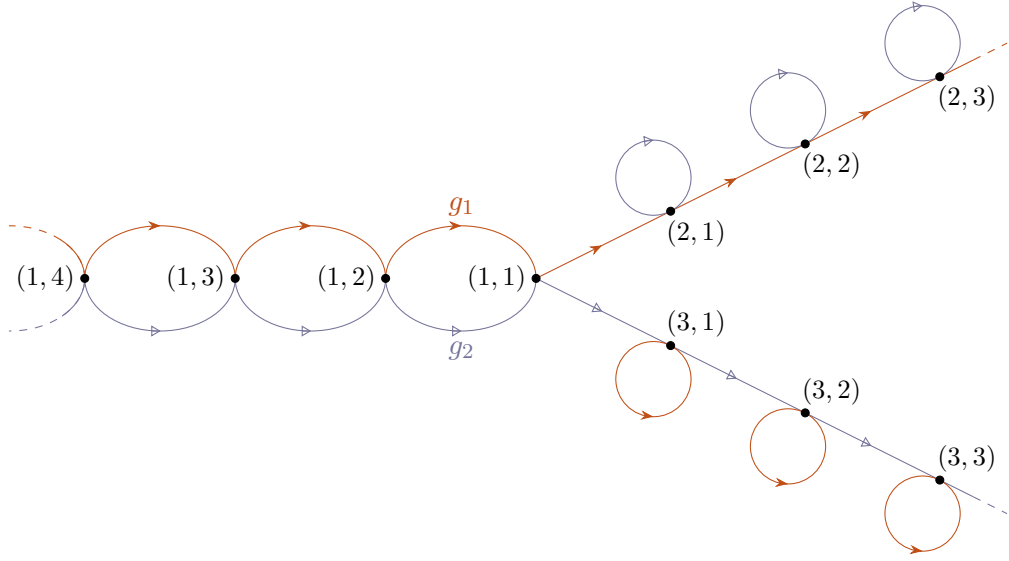
\begin{figure}
    \centering
    \begin{tikzpicture}
        \draw[color=Bittersweet] (6,0) -- (11.814,2.9065);
        \draw[color=Bittersweet, dashed] (11.814,2.9065) -- (12.261,3.13);
        \draw[color=Bittersweet, -Stealth] (6.86,0.43) -- (6.88,0.44);
        \draw[color=Bittersweet, -Stealth] (8.649,1.324) -- (8.669,1.334);
        \draw[color=Bittersweet, -Stealth] (10.438,2.219) -- (10.458,2.229);
        
        \draw[color=CadetBlue] (6,0) -- (11.814,-2.9065);
        \draw[color=CadetBlue, dashed] (11.814,-2.9065) -- (12.261,-3.13);
        \draw[color=CadetBlue, -{Triangle[open]}] (6.86,-0.43) -- (6.88,-0.44);
        \draw[color=CadetBlue, -{Triangle[open]}] (8.649,-1.324) -- (8.669,-1.334);
        \draw[color=CadetBlue, -{Triangle[open]}] (10.438,-2.219) -- (10.458,-2.229);
        
        \draw[color=Bittersweet] (2,0) arc (0:180:1 and 0.7);
        \draw[color=Bittersweet] (4,0) arc (0:180:1 and 0.7);
        \draw[color=Bittersweet] (6,0) arc (0:180:1 and 0.7);
        \draw[color=CadetBlue] (4,0) arc (180:360:1 and 0.7);
        \draw[color=CadetBlue] (2,0) arc (180:360:1 and 0.7);
        \draw[color=CadetBlue] (0,0) arc (180:360:1 and 0.7);
        \begin{scope}
        \clip(-1,-0.7)rectangle(0,0);
        \draw[color=CadetBlue, dashed] (-2,0) arc (180:360:1 and 0.7);
        \end{scope}
        \begin{scope}
        \clip(-0.3,-0.7)rectangle(0,0);
        \draw[color=CadetBlue] (-2,0) arc (180:360:1 and 0.7);
        \end{scope}
        \begin{scope}
        \clip(-0.32,0.7)rectangle(0,0);
        \draw[color=Bittersweet] (0,0) arc (0:180:1 and 0.7);
        \end{scope}
        \begin{scope}
        \clip(-1,0.7)rectangle(0,0);
        \draw[color=Bittersweet, dashed] (0,0) arc (0:180:1 and 0.7);
        \end{scope}

        \draw[color=Bittersweet, -Stealth] (0.99,0.7) -- (1.01,0.7);
        \draw[color=Bittersweet, -Stealth] (2.99,0.7) -- (3.01,0.7);
        \draw[color=Bittersweet, -Stealth] (4.99,0.7) -- (5.01,0.7) node[anchor=south] {$g_1$};
        \draw[color=CadetBlue, -{Triangle[open]}] (0.99,-0.7) -- (1.01,-0.7);
        \draw[color=CadetBlue, -{Triangle[open]}] (2.99,-0.7) -- (3.01,-0.7);
        \draw[color=CadetBlue, -{Triangle[open]}] (4.99,-0.7) -- (5.01,-0.7) node[anchor=north] {$g_2$};
        
        \draw[color=CadetBlue] (7.56,1.34) circle (0.5);
        \draw[color=CadetBlue]  (9.349,2.235) circle (0.5);
        \draw[color=CadetBlue] (11.138,3.129) circle (0.5);
        \draw[color=Bittersweet] (7.56,-1.34) circle (0.5);
        \draw[color=Bittersweet]  (9.349,-2.235) circle (0.5);
        \draw[color=Bittersweet] (11.138,-3.129) circle (0.5);

        \draw[color=CadetBlue, -{Triangle[open]}] (7.55,1.8387) -- (7.56,1.84);
        \draw[color=CadetBlue, -{Triangle[open]}] (9.339,2.7337) -- (9.349,2.735);
        \draw[color=CadetBlue, -{Triangle[open]}] (11.128,3.6277) -- (11.138,3.629);
        \draw[color=Bittersweet, -Stealth] (7.55,-1.8387) -- (7.56,-1.84);
        \draw[color=Bittersweet, -Stealth] (9.339,-2.7337) -- (9.349,-2.735);
        \draw[color=Bittersweet, -Stealth] (11.128,-3.6277) -- (11.138,-3.629);

        \filldraw[black] (6,0) circle (1.5pt) node[anchor=east]{\small$(1,1)$};
        \filldraw[black] (4,0) circle (1.5pt) node[anchor=east]{\small$(1,2)$};
        \filldraw[black] (2,0) circle (1.5pt) node[anchor=east]{\small$(1,3)$};
        \filldraw[black] (0,0) circle (1.5pt) node[anchor=east]{\small$(1,4)$};
        \filldraw[black] (7.789,0.894) circle (1.5pt); \node[anchor=west] at (7.6,0.6) {\small$(2,1)$};
        \filldraw[black] (9.578,1.789) circle (1.5pt); \node[anchor=west] at (9.4,1.5) {\small$(2,2)$};
        \filldraw[black] (11.367,2.683) circle (1.5pt); 
        \node[anchor=west] at (11.2,2.4){\small$(2,3)$};
        \filldraw[black] (7.789,-0.894) circle (1.5pt); 
        \node[anchor=west] at (7.6,-0.6) {\small$(3,1)$};
        \filldraw[black] (9.578,-1.789) circle (1.5pt); 
        \node[anchor=west] at (9.4,-1.5) {\small$(3,2)$};
        \filldraw[black] (11.367,-2.683) circle (1.5pt); 
        \node[anchor=west] at (11.2,-2.4) {\small$(3,3)$};
    \end{tikzpicture}
    \caption{Schreier graph of the action of $\mathcal{G}_3$ on $X_3$.}
    \label{H3}
\end{figure}

Brown showed in \cite[Theorem~5.1]{Brown} that each $\mathcal{G}_n$ is of type $FP_{n-1}$ but not of type $FP_n$ for all $n$ and that $\mathcal{G}_n$ is finitely presented for all $n \geq 3$. Recall that, for any $n$, if a group $G$ is of type $FP_n$ and is finitely presented then $G$ is of type $F_n$ (see e.g. \cite[Section~8.7]{Brown94}). 
This allows us to construct, for any $n \geq 2$, a pair of groups $H \leq G$ such that both $G$ and $H$ are of type $F_n$ and the quotient space $H \bs G$ is a quasi-line but $H$ is not a virtual fiber subgroup.

\begin{example} \label{Fn example}
    Fix $n \geq 3$ and let $\sigma$ be a permutation of $\{1,\dots,n\}$ such that $\sigma(1) = 1$.
    Define $\alpha: X_n \rightarrow X_n$ by $\alpha(i,m) = (\sigma(i),m)$
    and let $L$ be the subgroup of the permutation group of $X_n$ generated by $S_n \cup \{\alpha\}$. Note that $\alpha \mathcal{G}_n \alpha^{-1} = \mathcal{G}_n$ and $\alpha \notin \mathcal{G}_n$ unless $\sigma = \id$. Therefore $L = \mathcal{G}_n \rtimes \mathbb{Z}/ m \mathbb{Z}$, where $m$ is the order of $\sigma$, and $L$ is of type $F_{n-1}$ by \cite[Corollary~9]{Alonso}. Let $M \coloneqq \Stab_L((1,1))$. Then $\alpha \in M$ so $M = H_n \rtimes \la \alpha \ra$ and $M$ also has type $F_{n-1}$.\footnote{In fact, $M \cong L$: Let $\psi:X_n \rightarrow X_n - \{(1,1)\}$ be the bijection defined by $\psi(i,n) = (i,n)$ if $i \neq 1$ and $\psi(i,n) = (i,n+1)$ otherwise. Define $\overline{\psi}:\mathcal{G}_n \rightarrow H_n$ as follows. If $f \in \mathcal{G}_n$ then $\overline{\psi}(f)((1,1)) = (1,1)$ and $\overline{\psi}(f)_{|X_n - \{(1,1)\}} = \psi \circ f \circ \psi^{-1}$. One can check that $\overline{\psi}$ is an $\la\alpha\ra$-equivariant isomorphism so it induces an isomorphism $L \rightarrow M$.}
    
    Moreover, the Schreier graph of $M$ with respect to $S_n \cup \{\alpha\}$ is a quasi-tree whose number of ends is the number of orbits of the action of $\la\sigma \ra$ on $\{1, \dots, n\}$ and $L=M \cup Mg_1M$. By Theorem~\ref{main}, this implies that $M$ is not a virtual fiber subgroup.
    
    Using cyclic notation, if we take for instance $\sigma = (2,\dots,n)$ then this provides an example of a pair $H \leq G$ where $H \bs G$ is a quasi-line, both $H$ and $G$ are of type $F_{n-1}$ and $H$ is not a virtual fiber subgroup.
\end{example}

Suppose that $G,N,Q$ are finitely generated groups which fit into a short exact sequence:
\[
\begin{tikzcd}
    1 \arrow[r] &N \arrow[r] &G \arrow[r,"\pi"] &Q \arrow[r] & 1.
\end{tikzcd}
\]
Fix a (set-theoretic) section $\varphi: Q \rightarrow G$ of $\pi$. If $K \leq Q$ is a finitely generated subgroup such that $K \bs Q$ is an $n$-ended quasi-tree for some $2 \leq n < \infty$ then the pull-back $H \coloneqq \pi^{-1}(K)$ is a finitely generated subgroup of $G$ and $H \bs G$ is an $n$-ended quasi-tree. Moreover, if there is a finite subset $F \subseteq Q$ such that $Q =  KFK$ then $G = H\varphi(F)H$. Therefore if $K \leq Q$ is not a virtual fiber then neither is $H \leq G$. As demonstrated by Example~\ref{Fn example}, there exists, for each $2 \leq n < \infty$, such a pair $K \leq Q$ where $Q$ is finitely presented. Therefore one can use the Rips construction \cite{Rips} or any of its variants (see e.g. \cite{Arenas} and the references therein), to produce examples of finitely generated pairs $H \leq G$ where $H \bs G$ is an $n$-ended quasi-tree, $H$ is not a virtual fiber and $G$ is, in some precise way, ``well-behaved". For example, Arenas' construction \cite{Arenas} yields a group $G$ which is torsion-free hyperbolic (in particular, of type $F$) and virtually compact special.

\bibliographystyle{alpha}
\bibliography{Biblio}

\bigskip
{\footnotesize
  \noindent
  {\textsc{University of Bristol, School of Mathematics, Bristol, UK}} \par\nopagebreak
  \texttt{penelope.azuelos@bristol.ac.uk}

\end{document}